\newcommand{\leg}[2] {\left(\frac{#1}{#2}\right)}
\def\C{\mathbb{C}}
\def\Z{\mathbb{Z}}
\def\R{\mathbb{R}}
\def\H{\mathbb{H}}
\def\SL{{\rm SL}}
\renewcommand{\(}{\left(}
\renewcommand{\)}{\right)}
\def\la{\lambda}
\def\lap{\lambda^{\prime}}
\newtheorem{theorem}{Theorem}
\newtheorem{lemma}[theorem]{Lemma}
\newtheorem{corollary}[theorem]{Corollary}
\newtheorem{proposition}[theorem]{Proposition}
\theoremstyle{remark}
\newtheorem*{remark}{Remark}
\newtheorem{example}{Example}
\numberwithin{equation}{section}
\begin{document}


\title[Mock theta functions modulo  $3$]{Mock theta functions and weakly holomorphic modular forms  modulo $2$ and $3$}

\date{\today}

\author{Scott Ahlgren}
\address{Department of Mathematics\\
University of Illinois\\
Urbana, IL 61801} 
\email{sahlgren@illinois.edu} 
 \author{Byungchan Kim}
\address{School of Liberal Arts \\ Seoul National University of Science and Technology \\ 172 Gongreung 2 dong, Nowongu, Seoul,139-743, Korea}
\email{bkim4@seoultech.ac.kr}

\subjclass[2010]{Primary 11F33, 11F37}


\begin{abstract} 
We prove that the coefficients of the mock theta functions
\[
f(q) =  \sum_{n=1}^{\infty} \frac{ q^{n^2}}{(1+q)^2 (1+q^2)^2 \cdots (1+q^n)^2 }
\]
and 
\[
\omega(q):=1+\sum_{n=1}^\infty \frac{q^{2n^2+2n}}{(1+q)^2(1+q^3)^2\dots (1+q^{2n+1})^2} \]
possess no linear congruences modulo $3$.  We prove similar results for the moduli $2$ and $3$ 
for a wide class of weakly holomorphic modular forms and discuss applications.  This extends work of Radu
on the behavior of the ordinary partition function modulo $2$ and $3$.
\end{abstract}

\thanks{The first author was  supported by a grant from the Simmons Foundation (\#208525 to Scott Ahlgren).
The second author was supported by the Basic Science Research Program through the National Research Foundation of Korea (NRF) funded by the Ministry of Education, Science and Technology (NRF2012-0003238), and the TJ Park Science Fellowship from the  POSCO TJ Park Foundation.}


\maketitle

\section{Introduction} 
The arithmetic properties of  Fourier coefficients of modular forms are involved in many areas of number theory, and they
have formed the topic of a vast amount of research.  A prototypical modular form is the Dedekind eta function
\[\eta(z): = q^{1/24} \prod_{n=1}^{\infty} (1-q^n),\qquad q:=e^{2\pi iz},\]
whose inverse generates the partition function (the prototypical function of additive number theory) via the well-known relation
\begin{equation}\label{etadef}
 \frac1{\eta (z)}= q^{-1/24}\sum_{n=0}^{\infty} p(n) q^n.
\end{equation}
Most of what is known about the arithmetic properties of $p(n)$ stems from the interpretation of \eqref{etadef}
as a weakly holomorphic modular form of weight $-1/2$. Although there are far too many results to mention individually,
we mention Ono's  paper \cite{Ono:partition}  and the subsequent work \cite{AhlgrenOno} which show that Ramanujan's famous congruence
\begin{equation}\label{ramcong5}
p(5n+4) \equiv 0 \pmod{5}
\end{equation}
has an analogue for any modulus $M$ coprime to $6$  (see also the recent work of Folsom, Kent and Ono \cite{FolsomKentOno}).    Treneer \cite{Treneer1}, \cite{Treneer2} has extended these results to cover any weakly holomorphic modular form.
In the other direction, the first author and Boylan \cite{AhlgrenBoylan} showed that there are no congruences as simple as \eqref{ramcong5} for primes $\geq 13$.

The situation has been less clear for the primes which divide $24$.  
Parkin and Shanks \cite{ParkinShanks}
conjectured that
 \[
 \# \left\{ n \leq x: p(n) \not\equiv 0 \pmod{2} \right\}  \sim \frac{x}{2},\]
 and there is an analogous folklore conjecture modulo $3$.
The current results are far from this expectation; for example 
the best result for the number of odd values of $p(n)$  is  due to Nicolas \cite{Nicolas}, who obtained the bound
 $\sqrt{x}(\log\log x)^K/\log x$ for any $K$.
An old conjecture of Subbarao \cite{Subbarao} states that there are no linear congruences for $p(n)$ modulo $2$.
In  striking recent work,   Radu \cite{Radu:Crelle} has proved Subbarao's conjecture, as well as its analog modulo $3$, with clever and  technical arguments.
In this paper we will adapt the methods of Radu's paper to prove non-vanishing results for the coefficients of mock theta functions
and weakly holomorphic modular forms of certain types.

The function 
\[
f(q) = \sum_{n=0}^{\infty} a(n)q^n=1 + \sum_{n=1}^{\infty} \frac{ q^{n^2}}{(1+q)^2 (1+q^2)^2 \cdots (1+q^n)^2 }
\]
is a prototypical mock theta function  (see, for example, works of   Zwegers \cite{Zwegers:2001},   Bringmann and   Ono \cite{Ono:fq}, and   Zagier \cite{Zagier:2009}).  Using a standard argument and  \cite{Treneer1}, \cite{Treneer2}, one deduces that there are linear congruences $a(mn+t)\equiv 0\pmod {\ell^j}$ for any prime power $\ell^j$ with $\ell\geq 5$.
The function $f(q)$ coincides modulo $2$  with the generating function for partitions.  To see this, 
define the {\em rank} of a partition $\la$ as  $\la_1 - \ell (\la)$, where $\la_1$
is the largest part of $\la$ and $\ell(\la)$ is the number of parts. 
 Define  $N_{e}(n)$  and  $N_{o}(n)$ as the number of partitions of $n$ with even and odd ranks, respectively. Then we have 
\begin{equation}\label{fqrank}
f(q) := \sum_{n=0}^{\infty} a(n) q^n = 1 + \sum_{n=1}^{\infty} \( N_{e} (n) - N_{o} (n) \) q^n .
\end{equation}
Since $p(n) = N_{e} (n) + N_{o} (n)$, we have $a(n) \equiv p(n) \pmod{2}$ for all   $n$, and 
Radu's result  implies that there are no linear congruences for $a(n)$ modulo $2$.
Here we will prove
\begin{theorem}\label{mainthm1}
For any positive integer $m$ and any integer $t$ we have 
\[\sum a(mn+t)q^n\not \equiv 0\pmod 3 .\]
\end{theorem}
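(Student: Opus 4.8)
The plan is to deduce Theorem~\ref{mainthm1} from the general non-vanishing theorem for weakly holomorphic modular forms modulo $3$ that forms the technical core of this paper; the entire task then reduces to realizing $f(q)$ modulo $3$ as the reduction of such a form. The natural starting point is the classical Hecke-type (Appell--Lerch) representation
\[
f(q)\;=\;\frac{2}{(q;q)_\infty}\sum_{n=-\infty}^{\infty}\frac{(-1)^n q^{n(3n+1)/2}}{1+q^n},
\]
obtained by specializing the two-variable rank generating function $R(w,q)$ at $w=-1$ (one checks the first few coefficients against $a(0)=1$, $a(1)=1$, $a(2)=-2$). Clearing denominators gives $(q;q)_\infty f(q)\equiv 2\,\mu(q)\pmod 3$, where $\mu(q)$ is the Appell--Lerch sum on the right. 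The crucial structural input I would establish is that, although $\mu$ is genuinely mock, its reduction modulo $3$ agrees with the $q$-expansion of a weakly holomorphic modular form of half-integral weight on a suitable $\Gamma_0(N)$ with character. Concretely I would pursue one of two routes: either (i) complete $\mu$ to a harmonic Maass form and verify that its shadow, a weight $3/2$ unary theta series, has coefficients divisible by $3$, so that the non-holomorphic part disappears in the reduction; or (ii) produce a direct congruence of $(q;q)_\infty f(q)$ to an eta-quotient by elementary $q$-series manipulation, exploiting $(q;q)_\infty^3\equiv(q^3;q^3)_\infty\pmod 3$ together with the pentagonal number theorem to expose the modular structure.

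Granting such a representation, write $f(q)\equiv F(z)\pmod 3$ with $F=\sum b(n)q^n$ a weakly holomorphic modular form of the type covered by the general theorem; note that $F\not\equiv 0$ since $a(0)=1$. A linear congruence $\sum a(mn+t)q^n\equiv 0\pmod 3$ is precisely the assertion that $b(n)\equiv 0$ for all $n\equiv t\pmod m$, and the general theorem forbids exactly this for any nonzero $F$ in the class. Theorem~\ref{mainthm1} follows at once.

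For completeness I indicate how the general theorem would be proved, adapting Radu \cite{Radu:Crelle}. Suppose toward a contradiction that $b(n)\equiv 0\pmod 3$ on the progression $n\equiv t\pmod m$. After enlarging $m$ to absorb the level and replacing $t$ by a compatible residue, one isolates this progression via a combination of the operators $U_p$, $V_p$ and twists by Dirichlet characters, manufacturing from $F$ a nonzero weakly holomorphic form $g$ supported on prescribed residue classes. Since these operators preserve a finite-dimensional space of modular forms modulo $3$ of bounded weight and level, the orbit of $F$ is finite, and a Sturm-type bound reduces the vanishing question to a finite computation; the structure of the algebra of modular forms modulo $3$---in particular the action of the theta operator $\theta=q\,\frac{d}{dq}$ and the triviality of the Hasse invariant---then shows that the forced vanishing pattern is incompatible with $g\not\equiv 0$.

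The main obstacle is the first step: passing from the genuinely mock function $f(q)$ to an honest weakly holomorphic modular form modulo $3$, with explicit control of the weight, level and character so that the general theorem applies. This is delicate precisely because $3\mid 24$, so the Shimura correspondence and Hecke machinery that manufacture congruences for primes $\ell\geq 5$ degenerate at $3$; the absence of that machinery is simultaneously what makes congruences impossible and what forces the more hands-on, Radu-style argument used here.
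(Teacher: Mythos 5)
Your proposal has a genuine gap at its foundation, and you have correctly identified where it is: the passage from the mock theta function $f(q)$ to an honest weakly holomorphic modular form modulo $3$. Neither of your proposed routes can work. Route (i) fails concretely: the shadow of $f(q)$ is (up to a constant) the unary theta function $g_1(z)=-\sum_{n\in\Z}\left(n+\tfrac16\right)q^{\frac32\left(n+\frac16\right)^2}$, whose coefficients are proportional to $6n+1\equiv 1\pmod 3$, so they are \emph{never} divisible by $3$; moreover, ``reducing the non-holomorphic part modulo $3$'' is not meaningful, since it is a sum of incomplete gamma values rather than a $q$-series with integer coefficients. Route (ii) asks for a congruence of $f(q)$ (or $(q;q)_\infty f(q)$) to an eta-quotient modulo $3$; no such congruence is known, and the analogy with the modulus $2$ is misleading --- there $f\equiv \sum p(n)q^n \pmod 2$ follows from the trivial identity $N_e(n)-N_o(n)\equiv N_e(n)+N_o(n)\pmod 2$, and nothing of the sort exists modulo $3$. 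The paper sidesteps exactly this obstruction: rather than reducing the completion modulo $3$, it restricts to ``good'' progressions $t\pmod m$, those with $\left(\frac{1-24t}{p}\right)=-1$ for some $p\mid m$; on such progressions the non-holomorphic part of $M_{m,t}$ vanishes \emph{identically} (because the shadow is supported on $n$ with $1+24n$ a perfect square), and every progression contains a good sub-progression by a Chinese-remainder argument. This is the missing idea, and it cannot be replaced by a congruence for the shadow.

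Two further points would still block your argument even if the first step were granted. First, the paper's general theorem (Theorem \ref{etathm}) carries the hypothesis $(Q_{m,B},N)=1$, and the paper's own examples (e.g.\ $\eta^{-1}(5z)$) show some such restriction is necessary; so reducing Theorem \ref{mainthm1} to it would prove the result only for those $m$ compatible with the level of whatever form you produced, not for \emph{all} $m$ as the theorem asserts. Second, your sketch of the general theorem's proof (finite orbits under $U_p$, $V_p$, twists, plus a Sturm bound and the Hasse invariant) is not a viable mechanism for \emph{ruling out} congruences across infinitely many pairs $(m,t)$: Sturm-type bounds verify or refute a single given congruence, but they cannot by themselves show that no progression works. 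The actual engine in the paper (following Radu) is the Deligne--Rapoport integrality theorem: if $M_{Q,t}\equiv 0\pmod 3$, then $3^{-24Q}M_{Q,t}^{24Q}\Delta^j$ has integral coefficients, hence its expansion at the cusp $1/2$ must lie in $\Z[\tfrac1{2Q},\zeta_{2Q}]$, contradicting the explicit leading coefficient $3^{-24Q}Q^{-12Q}$ computed there (Proposition \ref{cusp12}), since $3\nmid 2Q$. That cusp-expansion computation, together with the orbit argument of Lemma \ref{reductionlemma} reducing to $(Q,6)=1$, is what does the real work, and none of it appears in your outline.
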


The mock theta function  
\[
\omega(q):=1+\sum_{n=1}^\infty \frac{q^{2n^2+2n}}{(1+q)^2(1+q^3)^2\dots (1+q^{2n+1})^2} = \sum_{n=0}^{\infty} c(n) q^n
\]
appears naturally with $f(q)$ as the component of a vector-valued mock modular form (see, for example,   \cite{Zwegers:2001}).
Andrews \cite{Andrews:Durfee} gives a partition theoretic interpretation for $c(n)$ as the number of partitions of $n+1$ into nonnegative integers such that every part in the partition, with the possible
exception of the largest part,  appears as a pair of consecutive integers.  For example, there are $6$ such partitions of $5$:
\[
5,\;\; 4+(1+0),\;\;3+(1+0)+(1+0),\;\;2+(2+1),\;\;2+(1+0)+(1+0)+(1+0),\;\;(1+0)+\cdots+(1+0).
\]
This function behaves quite differently modulo $2$.  In fact, Andrews \cite[Theorem 31]{Andrews:Durfee}  has shown that $c(n)$ is odd if and only if $n = 6j^2 + 4j$ for some integer $j$.  For the modulus $3$, we will prove
\begin{theorem}\label{mainthm2}
For any positive integer $m$ and any integer $t$ we have 
\[\sum c(mn+t)q^n\not \equiv 0\pmod 3. \]
\end{theorem}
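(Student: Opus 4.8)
The plan is to follow the same two-stage strategy that underlies Theorem \ref{mainthm1}: first realize $\omega(q)$, modulo $3$, as (a normalization of) a weakly holomorphic modular form of integral weight on a congruence subgroup, and then run the Radu-type non-vanishing argument on that form.

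For the first stage I would start from the fact, recalled in the introduction, that $\omega(q)$ is a component of a vector-valued mock modular form of weight $1/2$ whose shadow is a vector of weight-$3/2$ unary theta series. Isolating the $\omega$-component, rescaling the variable, and multiplying by an appropriate power of $q$ and of $\eta$ — both to clear the half-integral weight and to control the orders at the cusps — produces the holomorphic part of a harmonic Maass form of integral weight. The decisive point is that, reduced modulo $3$, the non-holomorphic contribution can be absorbed: its shadow is supported on a thin (essentially square-indexed) set of exponents, and combining this with the elementary congruence $\eta(z)^3\equiv\eta(3z)\pmod 3$ one shows that this normalization of $\omega(q)$ is congruent modulo $3$ to the reduction of an explicit weakly holomorphic modular form $F$ on some $\Gamma_0(N)$ with $3\mid N$. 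Pinning down $F$ together with its weight, its level $N$, and its principal parts at the cusps, and verifying rigorously that the shadow drops out mod $3$, is the step I expect to be the main obstacle, since this is exactly where the mock (rather than modular) nature of $\omega(q)$ must be confronted.

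For the second stage I would argue by contradiction. If $\sum c(mn+t)q^n\equiv 0\pmod 3$ for some $m$ and $t$, then the coefficients of $F$ lying in the progression $n\equiv t\pmod m$ all vanish modulo $3$; equivalently $F$ is annihilated mod $3$ by the projection onto that progression. A standard reduction, using that the operators $U_p$ and $V_p$ for primes $p\nmid 3N$ preserve the ambient space of forms mod $3$, lets me assume that $m$ is a power of $3$ times $N$, so that the hypothesis becomes a statement about iterates of $U_3$ applied to $F$.

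The contradiction is then obtained exactly as in Radu's treatment of the partition function. I would track the filtration of $F$ under the theta operator $\Theta=q\,\frac{d}{dq}$ together with the action of $U_3$, and show that a nonzero weakly holomorphic modular form of the shape produced in the first stage cannot be supported on a single residue class modulo a power of $3$: the support hypothesis forces the filtration and Hecke data of $F$ to collapse in a manner incompatible with its prescribed principal parts at the cusps, which rules out every pair $(m,t)$ simultaneously. Relative to Theorem \ref{mainthm1} the only genuinely new work is the mock-to-modular reduction of the first stage; once $F$ and its cusp data are identified, the filtration argument is re-run with the invariants attached to $\omega(q)$ in place of those attached to $f(q)$.
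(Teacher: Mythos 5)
Your proposal has two genuine gaps, and they occur precisely at the points where the real difficulties of this theorem live. First, the mock-to-modular step. You propose to show that a normalization of $\omega(q)$ is congruent modulo $3$ to a single weakly holomorphic modular form $F$, arguing that the non-holomorphic contribution can be ``absorbed'' modulo $3$ because the shadow is sparse, together with the congruence $\eta(z)^3\equiv\eta(3z)\pmod 3$. This is not a meaningful operation: the non-holomorphic part of the completion of $\omega$ is a series whose coefficients are incomplete gamma values $\Gamma\left(\tfrac12,4\pi(n+\tfrac13)y\right)$, and there is no sense in which such terms can be reduced modulo $3$; no congruence realizing $\omega$ mod $3$ as a weakly holomorphic form is known, and none is used in the paper. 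What sparseness of the shadow actually buys is the following: the paper forms, for each progression $t\pmod m$, the projection $\Omega_{m,t}$ of the \emph{completed} form, and observes that its non-holomorphic part vanishes identically (not merely mod $3$) exactly when the progression is \emph{good}, i.e.\ when $\left(\tfrac{-3t-2}{p}\right)=-1$ for some prime $p\mid m$, because the shadow coefficients are supported on exponents $3k^2+2k$. Progressions that are not good are then refined to good sub-progressions modulo $mp$ for an auxiliary prime $p$. Thus the passage from mock to modular is made progression by progression, and this refinement device, absent from your sketch, is what makes it rigorous.

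Second, your stage two rests on a mischaracterization of Radu's method. Radu's proof (and this paper's) is not a theta-operator/filtration/$U_3$ argument; the inadequacy of the filtration machinery for the moduli $2$ and $3$ is exactly why these cases resisted for so long, and your sketch gives no indication of how a mod $3$ filtration argument would actually close. What the paper does instead: after a reduction lemma that \emph{removes} powers of $3$ from $m$ (note the direction --- the modulus is reduced to $Q$ with $(Q,3)=1$, not enlarged to a power of $3$ times the level; moreover powers of $2$ cannot be removed here, because under matrices with $d$ even the completion of $\omega$ transforms into the completion of $f(q)$, a vector-valued complication your proposal never confronts), one forms $h_\Omega=\Omega_{Q,t}^{24Q}\Delta^j\in M_{12(Q+j)}(\Gamma_1(4Q))$ and computes the leading coefficient of its expansion at the cusp $1$: it equals $(-1)^Q(2Q)^{-12Q}$, a $3$-adic unit. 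If $\sum c(Qn+t)q^n\equiv 0\pmod 3$, then $3^{-24Q}h_\Omega$ has integral coefficients, so by the Deligne--Rapoport $q$-expansion principle its expansion at every cusp lies in $\Z\left[\tfrac1{4Q},\zeta_{4Q}\right]$; but the computed leading coefficient $3^{-24Q}(-1)^Q(2Q)^{-12Q}$ is not $3$-integral, a contradiction. This cusp computation --- which in turn requires Andrews' transformation formulas for $\Omega$ (with a sign correction) and careful control of the multiplier via the $24Q$-th power --- is the heart of the proof and is missing from your proposal.
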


It is natural to ask how these results extend to more general classes of modular forms.
In this direction, we   investigate the class 
\[
\mathcal S(B, k, N, \chi) := \left\{ \eta^{B} (z) F(z) : \text{$F(z) \in M_{k}^{!} (\Gamma_0 (N) , \chi )$} \right\},
\]
where $k$ is an integer or   half-integer and $M_{k}^{!} (\Gamma_0 (N) , \chi )$ is the space of weakly holomorphic modular forms of weight $k$ and level $N$ with  character $\chi$ (see Section 2 for definitions). If $f(z) \in \mathcal S(B, k, N, \chi)$, then we have
\[
f(z) = q^{B/24} \sum_{n \ge n_0} a_{f} (n) q^n.
\]
We show that certain forms of this type do not possess  linear congruences modulo $2$ or $3$.
If $m$  is a positive integer and  $B$ is an  integer with $6 \nmid B$, then write $m=2^r 3^s m'$ with $(m', 6)=1$,
and define a divisor of $m$ by 
\begin{equation}\label{qmb}
Q_{m, B}=\begin{cases} 
m'\ \ &\text{if}\ \ (B, 6)=1,\\
2^rm'\ \ &\text{if}\ \ (B, 6)=2,\\
3^sm'\ \ &\text{if}\ \ (B, 6)=3.
\end{cases}
 \end{equation}

\begin{theorem} \label{etathm} Suppose that  $\ell=2$ or $\ell=3$.
Suppose that $f\in \mathcal S(B, k, N, \chi)$ and that $f$ has a pole at infinity and leading coefficient equal to $1$.
Suppose that      $\ell\nmid BN$ and that the coefficients of $f$ are $\ell$-integral rational numbers.
Then for any positive integers $m$ and $t$ with $(Q_{m, B},N)=1$, we have
\[
\sum a_{f} (m n + t)q^n \not\equiv 0 \pmod{\ell}.
\]
\end{theorem}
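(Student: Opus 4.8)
The plan is to argue by contradiction. Suppose that $\sum_n a_f(mn+t)q^n \equiv 0 \pmod{\ell}$; since $f$ has leading coefficient $1$ at its pole, the case in which the pole itself lies in the progression gives $1\equiv 0$ at once, so I may assume it does not. Reduce everything modulo $\ell$, regarding $f$ as an element of $\mathbb{F}_\ell((q))$ (this uses the $\ell$-integrality hypothesis). Write $m=2^r3^sm'$ and let $p$ be the small prime different from $\ell$, so that $\{\ell,p\}=\{2,3\}$. The first step is to strip the $\ell$-part of $m$ using the Atkin operator $U_\ell$ together with the Frobenius congruence $g(q)^\ell\equiv g(q^\ell)\pmod{\ell}$ and the identity $\eta(z)^\ell\equiv\eta(\ell z)\pmod{\ell}$ (valid because $\ell\nmid B$ keeps the exponent of $\eta^B$ controlled modulo $\ell$): a vanishing progression to modulus $m$ yields one to modulus $m/\ell$, so after iterating I may assume $\ell\nmid m$.

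The second step removes the $p$-part of $m$ whenever this is possible. The factor $\eta^B$ is essentially a theta series: $\eta(z)=\sum_n(-1)^nq^{(6n-1)^2/24}$ is supported on a single residue class modulo $24$, and more generally the support of $\eta^B\pmod{\ell}$ is governed by a quadratic form. When $p\nmid B$ this theta structure allows one to absorb the $p$-part of $m$, whereas when $p\mid B$ the form $\eta^B=(\eta^p)^{B/p}$ loses this property at $p$ and the $p$-part must be retained. This is exactly the case distinction encoded in $Q_{m,B}$: after the reductions the effective modulus of the congruence is $Q:=Q_{m,B}$, and the hypothesis $(Q,N)=1$ guarantees that the portion of $m$ which genuinely raises the level is coprime to $N$.

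With effective modulus $Q$ coprime to $N$, I realize the extraction of the class $t\pmod{Q}$ as a twist of $f$ by additive characters modulo $Q$, i.e.\ as a weighted sum of the translates $f\left(z+\tfrac{j}{Q}\right)$. Since $f=\eta^B F$ transforms under $\Gamma_0(N)$ with a known multiplier and $\eta$ has an explicit transformation law (here $6\nmid B$ pins down the order of $\eta^B$ at every cusp), this twisted sum is congruent modulo $\ell$ to a weakly holomorphic modular form $g$ of weight $k+B/2$ on a level dividing $NQ^2$, with explicit character and bounded pole orders at all cusps. The hypothesis says precisely that the $q$-expansion of $g$ at the cusp $\infty$ is identically $0$ in $\mathbb{F}_\ell[[q]]$; as $g$ is the reduction of a weakly holomorphic modular form, vanishing to infinite order at one cusp forces $g\equiv 0\pmod{\ell}$ identically, hence its expansion at every cusp vanishes modulo $\ell$.

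To reach the contradiction I would compare the expansions of $g$ at a cusp $\mathfrak{a}$ chosen so that one of the translates carries the pole of $f$ at $\infty$ to $\mathfrak{a}$. Using the $\eta^B$ transformation law and $\ell\nmid BN$, the principal part of $g$ at $\mathfrak{a}$ has leading coefficient a unit multiple of the leading coefficient $1$ of $f$, so it is nonzero modulo $\ell$, contradicting $g\equiv 0$. The main obstacle is exactly to show that this principal part survives: the twisting by characters modulo $Q$ could in principle create cancellation among the translates at $\mathfrak{a}$, and ruling this out is where Radu's technique must be adapted, using the coprimality $(Q,N)=1$ together with the precise theta-support of $\eta^B$. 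Two features are essential and should be flagged: $6\nmid B$ is needed both to control the orders of $\eta^B$ at the cusps and to run the $p$-absorption, and $\ell\in\{2,3\}$ is needed because only for these moduli are the relevant spaces of modular forms small enough, and the reduction clean enough, for the argument to close, consistent with the genuine congruences for $\ell\ge 5$ noted in the introduction.
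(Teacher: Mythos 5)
Your outline matches the paper's in broad strokes (extract the progression by additive twists of $f$, reduce the modulus, compare expansions at a well-chosen cusp, invoke integrality of cusp expansions), but the proposal leaves unproved exactly the step that constitutes the proof. You write that "the main obstacle is exactly to show that this principal part survives" possible cancellation among the translates at the cusp, and you defer this to a future "adaptation of Radu's technique." That deferred step is the technical heart of the theorem, and it is settled not by ruling out cancellation among many competing terms but by showing that only \emph{one} term can contribute the extremal exponent. In the paper one expands $f_{Q,t}$ (with $Q=Q_{m,B}$) at the cusp $1/N$ and decomposes
\[
\begin{pmatrix} 1&\lambda\\ 0&Q\end{pmatrix}\begin{pmatrix}1&0\\ N&1\end{pmatrix}
= C'\begin{pmatrix}1&\lambda'\\ 0&Q/d_\lambda\end{pmatrix}\begin{pmatrix}d_\lambda&0\\ 0&1\end{pmatrix},
\qquad C'\in\Gamma_0(N),\quad d_\lambda=\gcd(1+\lambda N,\,Q).
\]
Because $f$ has a pole at infinity, the most negative exponent in the expansion can only arise from those $\lambda$ with $d_\lambda=Q$, and since $(Q,N)=1$ there is exactly one such $\lambda$ modulo $Q$; its contribution has leading coefficient $\xi Q^{B/2+k-1}$ with $\xi$ a root of unity. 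The hypothesis $\ell\nmid B$ forces $\ell\nmid Q$ and $\ell\nmid NN_Q$, so this coefficient is an $\ell$-unit, and one gets a contradiction with the Deligne--Rapoport integrality theorem for cusp expansions (the same tool you invoke implicitly when you assert that vanishing mod $\ell$ at one cusp propagates to every cusp). Without this uniqueness computation your argument does not close.

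Second, your mechanism for reducing the modulus from $m$ to $Q_{m,B}$ does not work as stated. From the assumed vanishing of $\sum a_f(mn+t)q^n\pmod\ell$ you cannot "strip the $\ell$-part of $m$" via $U_\ell$ and the Frobenius congruence: the progression $t\pmod{m/\ell}$ strictly contains $t\pmod m$, so its generating function can be shown to vanish only after proving that the hypothesized vanishing propagates to all companion classes $t+j(m/\ell)\pmod m$ --- and that propagation is the entire content of the reduction. The paper obtains it from the transformation law \eqref{etatranscon}: for $A\in\Gamma_0(NN_m)$ with $(a,6)=1$ one has $f_{m,t}^{24mN}\Delta^j\big|A=f_{m,t_A}^{24mN}\Delta^j$ with $t_A\equiv ta^2-\tfrac{B(1-a^2)}{24}\pmod m$; Deligne--Rapoport then shows that $f_{m,t}\equiv 0\pmod\ell$ forces $f_{m,t_A}\equiv 0\pmod\ell$, and the covering argument of \cite[Theorem 4.2]{Radu:Crelle} shows that these $t_A$ exhaust the classes $t+jQ_{m,B}\pmod m$. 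In particular, which parts of $m$ can be removed --- the case split in \eqref{qmb} --- is governed by this orbit of $t$ under $t\mapsto ta^2-\tfrac{B(1-a^2)}{24}$, not by the theta-series support of $\eta^B$; so your proposed "$p$-absorption via theta structure" likewise has no argument behind it.
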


\begin{remark} The analogous statement will hold for forms  with algebraic coefficients, where $\ell$ is replaced by any prime ideal over $\ell$.
\end{remark}

As an application, we  consider eta-quotients, which we express in the standard form
\begin{equation}\label{etaproddef}
 f (z) = \prod_{\delta \mid N } \eta ( \delta z )^{r_{\delta}}.
\end{equation}
Writing 
\begin{equation}\label{bfdef}
B=B_f:=\sum \delta r_{\delta},
\end{equation}
we have 
\begin{equation}\label{etaprodcoeff}
f(z) = q^\frac {B}{24}\sum a_f (n) q^n.
\end{equation}
\begin{corollary} \label{etacor}
Suppose that $f(z)$ is an eta-quotient as in \eqref{etaproddef}--\eqref{etaprodcoeff} and that $f$ has a pole at infinity.
Suppose that  $\ell=2$ or $\ell=3$ and that $\ell\nmid B$.
Write $N=\ell^s N'$ with $\ell\nmid N'$.
Then for any 
positive integers $m$ and $t$ with $(Q_{m, B}, N' )=1$, we have 
\[
\sum a_f (m n + t )q^n \not\equiv 0 \pmod{\ell}.
\]
\end{corollary}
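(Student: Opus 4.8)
The plan is to deduce the corollary from Theorem \ref{etathm} by pushing the $\ell$-part of the level up into the exponents of the eta-quotient, using the congruence coming from the Frobenius endomorphism. Writing $\eta(z)=q^{1/24}\prod_{n\ge 1}(1-q^n)$ and using $\prod_n(1-q^n)^\ell\equiv\prod_n(1-q^{\ell n})\pmod\ell$, one obtains $\eta(\ell z)\equiv\eta(z)^\ell\pmod\ell$ and, upon iterating, $\eta(\ell^a z)\equiv\eta(z)^{\ell^a}\pmod\ell$ for every $a\ge 0$; more generally $\eta(\ell^a\delta' z)\equiv\eta(\delta' z)^{\ell^a}\pmod\ell$ whenever $\ell\nmid\delta'$. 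First I would apply this to each factor of $f(z)=\prod_{\delta\mid N}\eta(\delta z)^{r_\delta}$. Since every $\delta\mid N=\ell^s N'$ factors as $\delta=\ell^a\delta'$ with $\delta'\mid N'$, this produces an eta-quotient
\[
g(z)=\prod_{d\mid N'}\eta(dz)^{s_d},\qquad s_d=\sum_{\ell^a d\mid N}\ell^a r_{\ell^a d},
\]
supported only on divisors of $N'$ and satisfying $f\equiv g\pmod\ell$ coefficient-by-coefficient.

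Next I would verify that the invariant $B$ is unchanged: a direct computation gives $B_g=\sum_{d\mid N'}d\,s_d=\sum_{\delta\mid N}\delta r_\delta=B$, so in particular $\ell\nmid B_g$ and the divisor $Q_{m,B}$ of Theorem \ref{etathm} is literally the same for $g$ as for $f$. The remaining hypotheses then transfer directly: $g$ is again an eta-quotient, so its leading coefficient equals $1$ and its coefficients are integers (hence $\ell$-integral); since $f$ has a pole at infinity we have $B<0$, so $g$ has the same order $B/24$ at infinity and a pole there as well; and $\ell\nmid N'$ by construction, while $(Q_{m,B},N')=1$ is the hypothesis of the corollary. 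Granting that $g$ lies in the class $\mathcal S(B,k,N',\chi)$ for an appropriate weight $k$ and character $\chi$, Theorem \ref{etathm} yields $\sum a_g(mn+t)q^n\not\equiv 0\pmod\ell$, and the conclusion for $f$ follows at once from $a_f(n)\equiv a_g(n)\pmod\ell$.

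I expect the last step to be the main obstacle: one must confirm that $g/\eta^B\in M_k^!(\Gamma_0(N'),\chi)$, i.e.\ that the reduced eta-quotient genuinely transforms with a character (and the correct half-integral theta-multiplier when $k\notin\Z$) on $\Gamma_0(N')$ and not merely on some larger level, since enlarging the level could destroy the coprimality $(Q_{m,B},N')=1$ needed to invoke the theorem. Writing $u_d$ for the exponents of $g/\eta^B$, the order-and-multiplier condition at the cusp $\infty$ holds automatically because factoring out $\eta^B$ forces $\sum_{d\mid N'}d\,u_d=0$; the substantive point is the companion condition at the cusp $0$ together with the determination of the multiplier, which is precisely where $\ell\nmid B$ and the coprimality hypothesis must be used. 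In carrying this out one should also check that the parities forced by the reduction keep the weight and multiplier of $g$ compatible with the odd (respectively prime-to-$3$) level demanded by Theorem \ref{etathm}, so that no passage to a larger level is required.
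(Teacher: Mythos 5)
Your first two steps are exactly the paper's: the Frobenius congruence $\eta(\ell^a\delta'z)\equiv\eta(\delta'z)^{\ell^a}\pmod{\ell}$ is precisely how the paper strips the $\ell$-part out of the level, and the invariance of $B$ under this replacement is noted there as well. The gap is in your final step. You insist that one must prove $g/\eta^B\in M_k^!(\Gamma_0(N'),\chi)$ on the level $N'$ itself, ``and not merely on some larger level, since enlarging the level could destroy the coprimality $(Q_{m,B},N')=1$.'' Both the demand and the reason for it are mistaken, and as stated your route cannot be completed. Writing $g/\eta^B=\prod_{d\mid N'}\eta(dz)^{u_d}$, the standard criterion (Newman/Ligozat) for membership in $M_k^!(\Gamma_0(M),\chi)$ requires both $\sum_d d\,u_d\equiv 0\pmod{24}$ (which holds: the sum is $0$, as you observe) and the cusp-zero condition $\sum_d (M/d)\,u_d\equiv 0\pmod{24}$ at whatever level $M$ one uses; the latter can genuinely fail at $M=N'$. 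Take $f=\eta(2z)^{-1}$, $\ell=3$, $N=N'=2$ (this satisfies every hypothesis of the corollary, including the pole at infinity): then $g/\eta^B=\eta(z)^{2}\eta(2z)^{-1}$, and $\sum_d(M/d)u_d$ equals $3,6,12$ at $M=2,4,8$, becoming $\equiv 0\pmod{24}$ first at $M=16=N'^4$; moreover $k=1/2$ here, so the paper's half-integral convention needs $4\mid M$ in any case. So there is no character $\chi$ making $g/\eta^B$ modular on $\Gamma_0(N')$, and no amount of work with $\ell\nmid B$ will produce one.

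The fix is exactly the move you reject: enlarge the level, but only by primes already dividing $N'$. The paper takes $M=N'^2$ when $\ell=2$ (so $N'$ is odd) and $M=N'^4$ when $\ell=3$. Using $\sum_d d\,u_d=0$ one gets
\[
\sum_{d\mid N'}\frac{N'^2}{d}\,u_d=\sum_{d\mid N'}u_d\,d\left(\frac{N'}{d}\right)^{2}(1-d^2),
\qquad
\sum_{d\mid N'}\frac{N'^4}{d}\,u_d=\sum_{d\mid N'}u_d\,d^{3}\left(\frac{N'}{d}\right)^{4}(1-d^2),
\]
and $24\mid d(1-d^2)$ for every odd $d$, while $24\mid d^{3}(1-d^2)$ for every integer $d$; hence the cusp-zero condition holds automatically at these enlarged levels, and $g/\eta^B\in M_k^!(\Gamma_0(M),\chi)$ for some $\chi$. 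Crucially, $N'^2$ and $N'^4$ have the same prime divisors as $N'$, so $\ell\nmid N'$ gives $\ell\nmid M$, and $(Q_{m,B},N')=1$ gives $(Q_{m,B},M)=1$: Theorem \ref{etathm} only cares about the radical of the level, not the level itself. (The weight parity also sorts itself out: for $\ell=2$, $N'$ odd forces $k=\frac12\bigl(\sum_d r_d-B\bigr)\in\Z$; for $\ell=3$ with $k\notin\Z$, $N'$ must be even, so $4\mid N'^4$ as required for half-integral weight.) With this replacement your argument closes and coincides with the paper's proof.
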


\begin{remark} The hypotheses are   satisfied if $\ell\nmid BN$ and $(m, N)=1$.
\end{remark}

We  give some examples involving   Corollary \ref{etacor}.
\begin{example}
A $k$-multipartition of $n$ is a $k$-tuple of partitions $(\pi_1 , \pi_2, \ldots, \pi_k)$ such that  $| \pi_1 | + | \pi_2 | + \cdots + | \pi_k | =n$.
 The generating function for $k$-multipartitions is 
 \[
\sum_{n=0}^{\infty} p_{k} (n) q^{n - k/24} = \eta^{-k} (z).
\]
Various congruences for $p_k (n)$ have been  studied (see, for example, \cite{multi1}, \cite{multi2}, \cite{multi3}). For example,  Andrews \cite{multi1} showed that for each prime $p\geq 5$ there are $(p+1)/2$ values of $b$ with $1 \le b \le p$ for which $p_{p-3} (pn+b) \equiv 0 \pmod{p}$.  
Corollary \ref{etacor} shows that if $\ell=2$ or $\ell=3$ and  $\ell \nmid k$, then there are no linear congruences for 
  $p_{k} (n)$ modulo $\ell$.
\end{example}

\begin{example}
A cubic partition of $n$ is a bi-partition ($\pi_1$, $\pi_2$) such that $\pi_2$ contains no odd part. 
For example, the cubic partitions of $3$ are 
\[
(3, \varnothing ), \quad (2+1, \varnothing) , \quad (1+1+1, \varnothing), \quad (1,2).
\]
The generating function for these partitions is 
\[
f_{\operatorname{cu}}(z)=\sum_{n=0}^{\infty} \operatorname{cu}(n) q^{n-3/24} = \eta^{-1} (z) \eta^{-1} (2z).
\]
In this case, the quantity $B$ from \eqref{bfdef} is a multiple of $3$.  So Corollary~\ref{etacor} does not apply for the prime $\ell=3$.
In fact,   H.-C. Chan \cite{HeiChi} has shown that
\[
\operatorname{cu}(3n+2) \equiv 0 \pmod{3}.
\]
 Corollary \ref{etacor} implies that there is no linear congruence for $\operatorname{cu}(n)$ modulo $2$.
\end{example}

\begin{example}
To explain Ramanujan's     congruences for $p(n)$, the {\em crank} of a  partition was introduced by  Andrews and   Garvan \cite{AGcrank}. 
Let $M_{e} (n)$  and $M_{o} (n)$ denote the  number of partitions of $n$ with even and odd  crank, respectively.   
As a counterpart to \eqref{fqrank}, we have
\[
\sum_{n=0}^{\infty} (M_{e} (n) - M_{o} (n) ) q^{n - 1/24} = \frac{\eta^{3}(z)}{\eta^{2}(2z)}.
\]
In \cite{CLK},  Choi, Lovejoy and  Kang showed that
\[
M_{e} (5n+4) - M_{o} (5n+4) \equiv 0 \pmod{5}.
\]
Here $B=-1$, so 
Corollary \ref{etacor} guarantees that there are no  linear congruences modulo $2$ or $3$.  
\end{example}

\begin{example}
Andrews  \cite{Frob1} introduced the  generalized Frobenius symbol $c\phi_2$ and showed that
\[
\sum_{n=0}^{\infty} c\phi_2 (n) q^{n-1/12} = \frac{\eta^5(2z)}{\eta^4 (z)\eta^2(4z)}.
\]
Andrews  \cite[Cor 10.1 and Thm 10.2]{Frob1} proved that
\[
c\phi_2 ( 2n+1) \equiv 0 \pmod{2} \quad \text{and} \quad c\phi_2 (5n+3) \pmod{5},
\]
and many congruence properties of these symbols have since been investigated  (see,  for example,  \cite{Frob2}, \cite{PauleRadu}, and \cite{Frob3}). 
 Corollary \ref{etacor} shows that there is no linear congruence of the form $c\phi_2 (mn+t)$ modulo $3$ with odd $m$  (it is likely that an adaptation of these methods can
 be used to remove the restriction on $m$ in this case).

 \end{example}

\begin{example}
The assumption that $f$ has a pole at  infinity is necessary.  The function $\eta(z)=q^\frac1{24}\sum(-1)^k q^\frac{k(3k+1)}2$ provides a simple example.
 For another example, the generating function for the number of $4$-core partitions of $n$ is given by
\[
\sum_{n=0}^{\infty} c_{4} (n) q^{n+15/24} = \frac{\eta^{4} (4z)}{\eta(z)},
\]
and  M. Hirschhorn and J. Sellers \cite{HS4core} have shown that
\[
c_{4} (9n+2) \equiv 0 \pmod{2}.
\]
\end{example}
\begin{example}  The assumption that $(Q_{m, B}, N)=1$ is also necessary in general.
For example, if we define $a(n)$ by
\[
\sum_{n=0}^{\infty} a(n) q^{n-5/24} = \eta^{-1} (5z),
\]
then we have $a(5n+1) \equiv a(5n+2) \equiv \cdots \equiv a(5n+4) \equiv 0 \pmod{2}$. 
\end{example}

  The 
 first author and   Boylan \cite{ABparity} proved that if $\ell$ is prime and $f=\sum a(n)q^n$ is a weakly holomorphic modular form
 with $f\not\equiv 0\pmod \ell$, then 
 then 
\begin{equation}\label{ABresult}
\# \left\{ n \leq x : a_f (n) \not\equiv 0 \pmod{ \ell } \right\}  \gg_{f,K} \frac{\sqrt{x}}{\log x} (\log\log x)^{K}
\end{equation}
for any positive integer $K$.
In each situation where the results described above imply  that $\sum a(mn+t)q^n\not\equiv 0\pmod\ell$, 
the lower bound \eqref{ABresult} applies to the number of non-zero coefficients (this will be clear from the method of proof).

The proofs follow the outline given by  Radu \cite{Radu:Crelle}, and require a careful analysis of the integrality properties of the 
functions $\sum a(mn+t)q^n$ at various cusps.  In Section 2, we give  some background on modular forms and mock modular forms. In Sections 3 and 4, we prove Theorems \ref{mainthm1} and \ref{mainthm2}. In Section 5, we prove Theorem \ref{etathm} and its corollary.

\section{Preliminaries}
We recall the definitions of harmonic weak Maass forms and mock modular forms (see, for example, \cite{Bruinier:2004fk}, \cite{Ono:2009} or \cite{Zagier:2009} for details). 
Given  $k\in \frac{1}{2}\Z\setminus \Z$, $z=x+iy$ with $x, y\in \R$, $4\mid N$ and   a Dirichlet character  $\chi$ modulo $N$,  a {\it harmonic weak Maass form of weight $k$ with Nebentypus $\chi$ on $\Gamma_0(N)$} is a  smooth function $F:\H\to \C$ satisfying the following:
\begin{enumerate}
\item For all $ \left(\begin{smallmatrix}a&b\\c&d
\end{smallmatrix} \right)\in \Gamma_0(N)$ and all $z \in \H$, we
have
\begin{displaymath}
F \left(\frac{a z +b}{c z +d} \right)
= \leg{c}{d}\epsilon_d^{-2k} \chi(d)\,(cz+d)^{k}\ F(z),
\end{displaymath}
where \begin{equation*}
\epsilon_d:=\begin{cases} 1 \ \ \ \ &{\text {\rm if}}\ d\equiv
1\pmod 4,\\
i \ \ \ \ &{\text {\rm if}}\ d\equiv 3\pmod 4. \end{cases}
\end{equation*}
\item  $\Delta_k(F)=0$, where $\Delta_k$ is the  weight $k$ hyperbolic Laplacian, given by
\begin{equation*}\label{laplacian}
\Delta_k := -y^2\left( \frac{\partial^2}{\partial x^2} +
\frac{\partial^2}{\partial y^2}\right) + iky\left(
\frac{\partial}{\partial x}+i \frac{\partial}{\partial y}\right).
\end{equation*}
\item The function $F$ has
at most linear exponential growth at the cusps of $\Gamma_0(N)$.
\end{enumerate}
We denote by $H_{k}\left(\Gamma_0(N), \chi\right)$ the space of harmonic weak Maass forms of weight $k$ with Nebentypus $\chi$ on  $\Gamma_0(N)$.
We denote the  subspace of weakly holomorphic forms (i.e., those meromorphic forms whose poles are supported at the cusps of $\Gamma_0(N)$) by  $M_{k}^!\left(\Gamma_0(N), \chi\right)$, and the space of holomorphic forms by $M_{k}\left(\Gamma_0(N), \chi\right)$   (if $\chi$ is trivial, then we drop it from the notation).
Each harmonic weak Maass form $F$ decomposes uniquely  as the sum of  a holomorphic part $F^{+}$ and a non-holomorphic part $F^{-}$.
The holomorphic part, which is  known as a \textit{mock modular form},  is a power series in $q$ with at most finitely many negative exponents.

Now define 
\begin{equation}\label{FDEF}
F(z)=(F_0(z), F_1(z), F_2(z))^T:=\left(q^{-1/24}f(q),\  2q^{1/3}\omega(q^{1/2}), \ 2q^{1/3}\omega(-q^{1/2})\right)^T
\end{equation}
and 
 \[G:=(g_1, g_0, -g_2)^T,\]
 where the $g_i (z)$ are theta functions defined by 
\begin{equation}\begin{aligned}\label{TF}
g_0(z)&=\sum_{n\in\Z}(-1)^n\left(n+\tfrac13\right)q^{\frac32\left(n+\frac13\right)^2},\\
g_1(z)&=-\sum_{n\in\Z} \left(n+\tfrac16\right)q^{\frac32\left(n+\frac16\right)^2},\\
g_2(z)&=\sum_{n\in\Z} \left(n+\tfrac13\right)q^{\frac32\left(n+\frac13\right)^2}.\\
\end{aligned}
\end{equation}
 Zwegers \cite{Zwegers:2001} showed that 
 \begin{equation}\label{zwegersdef}
 T(z):=F(z)-2i\sqrt{3}\int_{-\overline z}^{i\infty}\frac{G(\tau)}{(-i(z+\tau))^\frac12}\, d\tau
 \end{equation}
 transforms as 
\begin{equation} \label{htrans}\begin{aligned}
T(z+1)=&\begin{pmatrix} \zeta_{24}^{-1}& 0&0\cr
0&0&\zeta_3\cr
0&\zeta_3&0\cr
\end{pmatrix}T(z),\\
T(-1/z)=\sqrt{-i z}&\begin{pmatrix} 0& 1&0\cr
1&0&0\cr
0&0&-1\cr
\end{pmatrix}T(z).\\
\end{aligned}
\end{equation}
We also require the  incomplete gamma function, given by 
\[\Gamma(\alpha, x):=\int_x^\infty e^{-t}t^{\alpha-1}\, dt.\]

\section{Proof of Theorem \ref{mainthm1}}
We work with the function 
\[M(z)=F_0(z)-2i\sqrt{3}\int_{-\overline z}^{i\infty}\frac{g_1(\tau)}{(-i(z+\tau))^\frac12}\, d\tau. \]
A computation using \eqref{FDEF}, \eqref{TF} and \eqref{zwegersdef} shows that  $M(z)$ has the form
\begin{equation*}M(z)=q^{-1/24}\sum a(n)q^n+q^{-1/24}\sum b(n) \Gamma\left(\tfrac12, 4\pi y (n+\tfrac1{24})\right)q^{-n},\end{equation*}
where
\begin{equation}
\label{quadcond}
b(n)=0 \qquad \text{unless}\qquad n=\frac{k(3k+1)}2\qquad \text {for some integer $k$}.
\end{equation}

For ease of notation we will write 
\begin{equation}M(z)=H(z)+NH(z),\label{Mform}
\end{equation}
where 
\begin{equation}H(z)=F_0(z)=q^{-1/24}\sum a(n)q^n,\label{Hform}
\end{equation}
and $NH(z)$ is the non-holomorphic part.

Suppose that $m$ is a positive integer and that $t$ is a non-negative integer.
Setting $\zeta_m:=e^{\frac{2\pi i}m}$, we define
\begin{equation}\label{mmtdef}
M_{m,t} := \frac1m\sum_{\la=0}^{m-1}M_{\la, m, t}
\end{equation}
where 
\begin{equation}\label{mlmtdef}
M_{\la, m, t} := \zeta_{m}^{-\la (t-1/24)} M\( \( \begin{matrix} 1 & \la \\ 0 & m \end{matrix} \) z\).
\end{equation}
Using notation in analogy with \eqref{Mform}, we  write
\[M_{m,t}=H_{m,t}+NH_{m,t}.\]
A calculation gives
\begin{equation}\label{hmt}
H_{m,t} = q^{ \frac{t-1/24}{m}} \sum a(mn+t) q^n
\end{equation}
and 
\begin{equation}\label{gmt}
NH_{m,t} = q^{ \frac{t-1/24}{m}} \sum   b(mn-t) \Gamma\left(\tfrac12, 4\pi y\cdot(n+\tfrac{1/24-t}m  )  \right)q^{-n}.
\end{equation}

In order to prove Theorem \ref{mainthm1} we must show that for any progression $t\pmod m$ we have 
\begin{equation}\label{goal}
H_{m, t}\not \equiv 0\pmod 3.
\end{equation}
We call a progression $t\pmod m$ {\it good} if for some $p\mid m$ we have 
$\leg{1-24t}p=-1$.  By \eqref{quadcond} and \eqref{gmt} we see that  if $t\pmod m$ is good, then  $NH_{m, t}=0$.
Suppose that the progression $ t\pmod m$ is not good.  In this case we  pick a prime $p\geq 5$ with $p\nmid m$ and a quadratic non-residue $x\pmod p$,
and we find a  solution to the system of congruences
\begin{align*}
T&\equiv t\pmod m\\
T&\equiv \frac{1-x}{24}\pmod p.
\end{align*}
After replacing $t\pmod m$ by the sub-progression $T\pmod{mp}$,
we are reduced in proving  \eqref{goal} to considering progressions which are good. 

The next two propositions describe the transformation properties of the forms $M_{m, t}$.
Given a positive integer $m$, we define
\begin{equation}\label{nmdef}
N_m:=\begin{cases} 
2m\ \ \ &\text{if}\ \ \ (m, 6)=1,\\
8m\ \ \ &\text{if}\ \ \ (m, 6)=2,\\
6m\ \ \ &\text{if}\ \ \ (m, 6)=3,\\
24m\ \ \ &\text{if}\ \ \ (m, 6)=6.\\
\end{cases}
\end{equation}

\begin{proposition} \label{modular}
Suppose that $t\pmod m$ is good.
Then  
\[M_{m, t}^{24m}=H_{m, t}^{24m} \in M_{12m}^!(\Gamma_1(N_m)).\]
\end{proposition}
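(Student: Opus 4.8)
The plan is to recognize $M(z)$ as the first component of Zwegers's completed vector-valued form and to exploit its $\SL_2(\Z)$-transformation law \eqref{htrans}. Comparing \eqref{FDEF}, \eqref{zwegersdef}, and the definition of $M(z)$, we see that $M(z)=T_0(z)$, the first entry of $T(z)=(T_0,T_1,T_2)^T$. Thus $M$ is a component of a weight-$\tfrac12$ vector-valued harmonic Maass form, and \eqref{htrans} records how $T$ transforms under the generators $z\mapsto z+1$ and $z\mapsto -1/z$ of $\SL_2(\Z)$. Since $t\pmod m$ is good we have $NH_{m,t}=0$, so $M_{m,t}=H_{m,t}$ and the first equality in the proposition is immediate; it remains to prove that $M_{m,t}$ transforms as a weight-$\tfrac12$ form on $\Gamma_1(N_m)$ with a root-of-unity multiplier, since the $24m$-th power will then have integer weight $12m$.

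First I would fix $\gamma=\begin{pmatrix} a & b \\ c & d\end{pmatrix}\in\Gamma_1(N_m)$ and, writing $B_\lambda:=\begin{pmatrix} 1 & \lambda \\ 0 & m\end{pmatrix}$, decompose $B_\lambda\gamma=\gamma_\lambda B_{\lambda^*}$, where $\gamma_\lambda\in\SL_2(\Z)$, the map $\lambda\mapsto\lambda^*$ is a permutation of $\Z/m\Z$, and a direct computation shows that $\gamma_\lambda$ has bottom row $(mc,\ d-c\lambda^*)$ and top-left entry $a+\lambda c$. A short calculation with $B_{\lambda^*}z=(z+\lambda^*)/m$ then gives the automorphy factor $(cz+d)^{1/2}$. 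This reduces the transformation of each $M(B_\lambda\gamma z)=M(\gamma_\lambda B_{\lambda^*}z)$ to the transformation of $M=T_0$ under $\gamma_\lambda\in\SL_2(\Z)$, which one obtains by writing $\gamma_\lambda$ as a word in the generators and iterating \eqref{htrans}.

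The heart of the argument is to show that for $\gamma\in\Gamma_1(N_m)$ every $\gamma_\lambda$ lies in the subgroup of $\SL_2(\Z)$ on which the Zwegers representation $\rho$ preserves the first coordinate line, acting on $T_0$ by a root of unity and contributing nothing from the $\omega$-components $T_1,T_2$. Because $\rho$ factors through a finite quotient in which $z\mapsto z+1$ scales $T_0$ by $\zeta_{24}^{-1}$ while $z\mapsto -1/z$ moves $T_0$ into $T_1$, the condition that the $\gamma_\lambda$ avoid the $\omega$-components and return a predictable multiplier is a congruence condition on $\gamma$; the values of $N_m$ in \eqref{nmdef}, with the separate factors $2,8,6,24$ according to $(m,6)$, are exactly those for which $\Gamma_1(N_m)$, conjugated by the $B_\lambda$, lands in this subgroup. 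I would then verify that the twists $\zeta_m^{-\lambda(t-1/24)}$ from \eqref{mlmtdef} recombine under $\lambda\mapsto\lambda^*$, so that after summing over $\lambda$ one obtains $M_{m,t}(\gamma z)=\nu(\gamma)\,(cz+d)^{1/2}M_{m,t}(z)$ for a single root of unity $\nu(\gamma)$ independent of $\lambda$. I expect this multiplier-and-level bookkeeping to be the main obstacle, since it requires tracking the half-integer-weight theta multiplier together with the $\zeta_{24}$ and $\zeta_m$ phases simultaneously.

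Finally, raising to the $24m$-th power gives integer weight $12m$; as $\nu(\gamma)$ and the weight-$\tfrac12$ theta multiplier take values in roots of unity of order dividing $24m$, the $24m$-th power trivializes the multiplier and yields an honest trivial-character transformation on $\Gamma_1(N_m)$. Since $t$ is good, $M_{m,t}=H_{m,t}$ is holomorphic on $\H$ and its only singularities occur at the cusps, whence $H_{m,t}^{24m}\in M_{12m}^!(\Gamma_1(N_m))$, as claimed.
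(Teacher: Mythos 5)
Your setup — the decomposition $B_\lambda \gamma = \gamma_\lambda B_{\lambda^*}$ with $\gamma_\lambda$ having bottom row $(mc,\,d-c\lambda^*)$, the observation that goodness forces $NH_{m,t}=0$ so $M_{m,t}=H_{m,t}$, and the final power-raising step — matches the paper's. But the heart of the proof is missing. It is not enough to know that each $\gamma_\lambda$ preserves the line $\C T_0$: that already follows from $\gamma_\lambda \in \Gamma_0(2)$ (which is automatic here, since the lower-left entry $mc$ is even), and it only yields $M(\gamma_\lambda w) = \nu_\lambda \,(\text{automorphy factor})\, M(w)$ with a root of unity $\nu_\lambda$ that a priori depends on $\lambda$. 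To conclude that $M_{m,t}\big|\gamma$ is a \emph{single} root of unity times $M_{m,t_A}$, so that the $24m$-th power is literally invariant, you must show that the combined quantity $\nu_\lambda\, \zeta_m^{-\lambda(t-1/24)}\zeta_m^{\lambda^*(t_A-1/24)}$ is independent of $\lambda$. This is where essentially all of the work in the paper lies: it uses the explicit Bringmann--Ono multiplier \eqref{wabcd}, whose $\lambda$-dependence enters through the Dedekind sums $s(-d+c\lambda^*, mc)$, and then two Dedekind-sum identities — Lemma \ref{LewisLem2} and \eqref{lewis} — to cancel that dependence against the $\zeta_m$ twists (see \eqref{simp}--\eqref{simp3}, together with the parity argument \eqref{minus1}). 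Your proposed substitute, writing each $\gamma_\lambda$ as a word in the generators and iterating \eqref{htrans}, produces for each $\lambda$ an opaque product of $\zeta_{24}$'s and $\zeta_3$'s attached to a word that changes with $\lambda$; it gives no mechanism for comparing the multipliers across different $\lambda$, which is precisely the issue. You flag this as ``the main obstacle,'' but that obstacle \emph{is} the proposition: without it, the sum over $\lambda$ need not reassemble into any $M_{m,t_A}$ at all.

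Two further inaccuracies. First, your explanation of the level $N_m$ is off: preservation of the first coordinate already holds on $\Gamma_0(2)$ (this is why the scalar law \eqref{Mtrans} exists there); the factors $2,8,6,24$ in \eqref{nmdef} are needed in the root-of-unity bookkeeping — e.g.\ to make $24\mid 1-a^2$ so that $t_A$ in \eqref{tadef} is well defined, and to run the parity argument at \eqref{minus1} — not to force line-preservation. Second, working directly with all of $\Gamma_1(N_m)$ hides a subtlety: when $3\nmid N_m$, elements of $\Gamma_1(N_m)$ can have $3\mid a$, and for these $t_A$ is not even defined; the paper proves the transformation law (Proposition \ref{Mtransprop}) only for matrices with $3\nmid a$ and then invokes the fact that such matrices generate $\Gamma_1(N_m)$. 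Likewise, your assertion that the multipliers are roots of unity of order dividing $24m$ is exactly what the explicit formula together with \eqref{lewis} proves (cf.\ \eqref{wa}); it cannot simply be asserted.
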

Now if $A=\begin{pmatrix} a & b \\ c & d \end{pmatrix}\in \Gamma_0(N_m)$ has $3\nmid a$ then
we define 
$t_A$ to be any integer with 
\begin{equation}\label{tadef}
t_A  \equiv a^2 t + \frac{1-a^2}{24} \pmod{m}.
\end{equation}

\begin{proposition} \label{Mtransprop} 
Suppose that $t\pmod m$ is good.
Then for every $A=\begin{pmatrix} a & b \\ c & d \end{pmatrix}\in \Gamma_0(N_m)$ with $3\nmid a$ we have
\begin{equation}\label{gamma0trans1}
 M_{m, t}^{24m}\big|_{12m}A
=  M_{m, t_A}^{24m}.
\end{equation}
\end{proposition}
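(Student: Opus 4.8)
The plan is to reduce the weight-$12m$ slash action on $M_{m,t}^{24m}$ to the transformation behaviour of the single function $M=F_0=T_0$ under individual elements of $SL_2(\Z)$, exploiting the fact that $M$ is the first component of the vector-valued form $T$ governed by \eqref{htrans}. Writing $B_\lambda=\left(\begin{smallmatrix}1 & \lambda\\0 & m\end{smallmatrix}\right)$, so that $M_{\lambda,m,t}(z)=\zeta_m^{-\lambda(t-1/24)}M(B_\lambda z)$, the first step is the coset manipulation
\[
B_\lambda A=A_\lambda B_{\lambda'},\qquad A_\lambda\in SL_2(\Z),\quad B_{\lambda'}=\left(\begin{matrix}1 & \lambda'\\0 & m\end{matrix}\right),
\]
which is possible because $m\mid N_m\mid c$ makes the congruence $a\lambda'\equiv b+\lambda d\pmod m$ solvable: indeed $ad\equiv1\pmod m$ shows $a$ is invertible modulo $m$ with $a^{-1}\equiv d$, so $\lambda'\equiv d(b+\lambda d)\pmod m$ and $\lambda\mapsto\lambda'$ is a bijection of residues. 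A direct computation yields $A_\lambda=\left(\begin{smallmatrix}a+\lambda c & \ast\\ mc & d-c\lambda'\end{smallmatrix}\right)$ with $\det A_\lambda=1$, and the crucial bookkeeping identity is that with $w=B_{\lambda'}z=\tfrac{z+\lambda'}{m}$ one has $(mc)w+(d-c\lambda')=cz+d$, so the automorphy factor $(cz+d)^{1/2}$ emerging from $A_\lambda$ is \emph{the same for every} $\lambda$.

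Next I would show that $M(A_\lambda w)$ is a scalar multiple of $M(w)$ alone, with no contribution from $T_1,T_2$. Reading off \eqref{htrans}, the generators act on $\{T_0,T_1,T_2\}$ by the permutations $S\mapsto(1\,2)$ and $V\mapsto(0\,1)$, so the underlying permutation representation is reduction modulo $2$ under $SL_2(\F_2)\cong S_3$, index $0$ corresponding to $(1,0)^T$. The first row of $\rho(A_\lambda)$ is supported on the $0$-column precisely when $\phi(A_\lambda)$ fixes $0$, i.e.\ when $A_\lambda\equiv\left(\begin{smallmatrix}1 & \ast\\0 & 1\end{smallmatrix}\right)\pmod2$; since every $N_m$ is even, $c$ and hence $mc$ are even and $ad\equiv1\pmod2$ forces $a$ (and so $a+\lambda c$) odd, so this holds automatically. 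Thus $M(A_\lambda w)=\varepsilon_\lambda\,(cz+d)^{1/2}M\!\left(\tfrac{z+\lambda'}{m}\right)$ for a root of unity $\varepsilon_\lambda$ combining the half-integral-weight multiplier (the $\left(\tfrac{c}{d}\right)\epsilon_d^{-1}$ theta factor) with the scalar $[\rho(A_\lambda)]_{00}$ coming from expressing $A_\lambda$ through $S$ and $V$.

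Summing over $\lambda$ then gives
\[
M_{m,t}(Az)=(cz+d)^{1/2}\,\frac1m\sum_{\lambda=0}^{m-1}\zeta_m^{-\lambda(t-1/24)}\,\varepsilon_\lambda\,M\!\left(\tfrac{z+\lambda'}{m}\right),
\]
and the goal is to identify the right-hand side, up to a single constant, with $(cz+d)^{1/2}M_{m,t_A}(z)$ built from \eqref{mmtdef}--\eqref{mlmtdef}. This forces the phase identity
\[
\zeta_m^{-\lambda(t-1/24)}\,\varepsilon_\lambda\,\zeta_m^{\lambda'(t_A-1/24)}=\text{constant independent of }\lambda,
\]
which upon inserting $\lambda'\equiv d(b+\lambda d)$ and $t_A\equiv a^2t+\tfrac{1-a^2}{24}\pmod m$ from \eqref{tadef} is exactly where the $a^2$ in $t_A$ and the $\tfrac1{24}$-shifts from the $q^{-1/24}$ normalization of $F_0$ are meant to cancel against $\varepsilon_\lambda$. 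I expect the explicit evaluation of $\varepsilon_\lambda$ — tracking the interplay of $\zeta_{24}$, $\zeta_3$, the $\epsilon_d$ and the Jacobi symbol through the divisibilities $8\mid N_m$, $3\mid N_m$ hidden in \eqref{nmdef} — to be the main obstacle, and the hypothesis $3\nmid a$ to be needed precisely to control the order-$3$ phases $\zeta_3$ (the mod-$2$ permutation being free of charge, as noted above).

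Finally I would raise to the $24m$th power. Since every phase in sight ($\zeta_{24}$, $\zeta_3$, $\epsilon_d$, $\zeta_m$, and the leftover constant) is a $24m$th root of unity, the multiplier and the surviving constant become trivial, $(cz+d)^{1/2}$ becomes $(cz+d)^{12m}$ matching the weight-$12m$ slash, and the factors $m^{-24m}$ agree on both sides, yielding $M_{m,t}^{24m}\big|_{12m}A=M_{m,t_A}^{24m}$. As a consistency check worth recording, $1-24t_A\equiv a^2(1-24t)\pmod m$ with $p\nmid a$ for each $p\mid m$ (because $\gcd(a,c)=1$ and $m\mid c$), so $t_A\pmod m$ is again good; hence Proposition~\ref{modular} applies on both sides and the identity is an equality of weakly holomorphic objects rather than merely of completed forms.
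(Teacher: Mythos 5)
Your setup coincides with the paper's: the coset identity $B_\lambda A=A_\lambda B_{\lambda'}$ with $a\lambda'\equiv b+d\lambda\pmod m$, the bijectivity of $\la\mapsto\lap$, and the observation that the automorphy factor $(cz+d)^{1/2}$ is the same for all $\la$ are all exactly \eqref{trans1}--\eqref{lamprimedef}. Your argument that $M(A_\la w)$ is a root of unity times $(cz+d)^{1/2}M(w)$ --- via the monomial structure of \eqref{htrans} and the fact that the underlying permutation action factors through reduction mod $2$, with $A_\la$ unipotent upper-triangular mod $2$ --- is a legitimate alternative to the paper's direct appeal to the Bringmann--Ono formula \eqref{Mtrans} on $\Gamma_0(2)$, and is a nice observation. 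But what you have written after that point is a plan, not a proof, and the step you defer is the entire content of the proposition. The identity you label as ``the phase identity,'' namely that $\zeta_m^{-\la(t-1/24)}\,\varepsilon_\la\,\zeta_m^{\lap(t_A-1/24)}$ is independent of $\la$, is precisely what must be proved: if it fails, the sum over $\la$ is some other linear combination of the $M_{\lap,m,t_A}$ and does not reassemble into $M_{m,t_A}$ at all, and raising to the $24m$-th power cannot repair a sum that is not already proportional to $M_{m,t_A}$. You explicitly flag this evaluation as ``the main obstacle'' and leave it undone.

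Moreover, this obstacle is not a routine computation in your framework. Writing $A_\la$ as a word in the generators and tracking the scalar phases of \eqref{htrans} through half-integral-weight cocycle compositions is essentially equivalent to re-deriving the Dedekind-sum multiplier; that is why the paper instead invokes the closed formula \eqref{wabcd}, in which the $\la$-dependence sits inside $s(-d+c\lap,mc)$, and then uses Lemma~\ref{LewisLem2} (Lewis) together with \eqref{lewis} to show
\[
w(A_\la)\,\zeta_{m}^{-\la ( t  -1/24 )}\, \zeta_{m}^{\la' (t_A  -1/24 )}
 =\zeta_2(A, m)\, e^{-2\pi i\lap\frac{1-a^2}{24m}}\,\zeta_{m}^{-\la  t+\la't_A},
\]
after which the exponential cancels against $\zeta_m^{\lap\frac{1-a^2}{24}}$ coming from $\zeta_m^{-\la t+\lap t_A}$ (using $\la\equiv a^2\lap-ab\pmod m$ and the definition \eqref{tadef} of $t_A$); the hypothesis $3\nmid a$ (with $a$ automatically odd) is what makes $\frac{1-a^2}{24}$ an integer and what Lemma~\ref{LewisLem2} requires --- not merely a device ``to control the $\zeta_3$ phases.'' A secondary gap of the same nature: your final step needs $\varepsilon_\la$ and the leftover constant to be $24m$-th roots of unity, which in the paper follows from \eqref{lewis} and \eqref{wa}, but which is not immediate in the word-in-generators approach because composing the square-root automorphy factors introduces sign ambiguities that must be controlled by exactly the same Dedekind-sum identities. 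So the skeleton is right and matches the paper, but the proposal omits, rather than proves, the proposition's core.
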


In each case, the matrices $A$ as above with $3\nmid a$ generate $\Gamma_1(N_m)$.
Therefore  Proposition~\ref{modular}  follows immediately from Proposition~\ref{Mtransprop}.
For Proposition~\ref{Mtransprop} we require
a transformation law from the work of   Bringman and  Ono \cite[p. 251]{Ono:fq}.  
In particular,   for $A:=\begin{pmatrix} a & b \\ c & d \end{pmatrix}\in \Gamma_0(2)$ with $c>0$, we have
\begin{equation}\label{Mtrans}
M\left(\frac{az+b}{cz+d}\right)=w(A)\cdot (cz+d)^\frac12 M(z),
\end{equation}
where 
$w(A)$ is the root of unity given by
\begin{equation}\label{wabcd}
w(A):=i^{-\frac12}\cdot e^{-\pi i s(-d, c)}\cdot (-1)^\frac{c+1+ad}2\cdot
e^{2\pi i(-\frac{a+d}{24c}-\frac a 4+\frac{3 d c}8)}
\end{equation}
and   $s(d,c)$ is the Dedekind sum defined by 
\begin{equation}\label{dedsum}
s(d,c) = \sum_{r=1}^{c-1} \left( \frac{r}{c} -  \left\lfloor \frac{r}{c} \right\rfloor - \frac{1}{2} \right) \left( \frac{dr}{c} - \left\lfloor \frac{dr}{c} \right\rfloor - \frac{1}{2} \right).\end{equation}

For any $A\in \SL_2(\Z)$, we have the following (see, e.g., \cite[p. 247]{Lewis}):
\begin{equation}\label{lewis}
12s(-d, c)+\frac{a+d}c\in \Z,
\end{equation}
and so 
\begin{equation}\label{wa}
w(A)^{24}=1.\end{equation}
We also require a transformation law which follows from Lemma~2 of Lewis  \cite{Lewis} (we have corrected
a sign error in the statement of that lemma).
\begin{lemma}
\label{LewisLem2}
Let $m$ be a positive integer.  Then for every $A=\begin{pmatrix} a & b \\ c & d \end{pmatrix}\in \Gamma_0(N_m)$ with   $c>0$ and $3 \nmid a$ we have
\[
s ( d+c , mc ) = s (d,mc) + \frac{ 1-a^2 }{12m} + \text{even integer}.
\]
\end{lemma}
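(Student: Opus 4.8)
The statement is a transformation rule for Dedekind sums under the shift $d\mapsto d+c$ at modulus $mc$, and the plan is to obtain it from the cocycle property of the Dedekind $\eta$-multiplier system. Equivalently, I would work with Rademacher's function $\Phi(\gamma)=\frac{a+d}{c}-12\operatorname{sign}(c)\,s(d,|c|)$, which satisfies $\Phi(\gamma_1\gamma_2)=\Phi(\gamma_1)+\Phi(\gamma_2)-3\operatorname{sign}(c_1c_2c_3)$ for $\gamma_1,\gamma_2,\gamma_1\gamma_2\in \SL_2(\Z)$. This reduces any identity between Dedekind sums to a matrix computation plus the control of explicit rational and sign corrections.

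First I would record the arithmetic that makes both Dedekind sums legitimate. Since $A\in\Gamma_0(N_m)$ and $m\mid N_m$, we have $m\mid c$, and then $ad\equiv 1\pmod m$ forces $\gcd(d,m)=1$; combined with $\gcd(d,c)=1$ (which is immediate from $ad-bc=1$) this gives $\gcd(d,mc)=1$. Because $m\mid c$ we also have $d+c\equiv d\pmod m$, so $\gcd(d+c,mc)=1$ as well. Consequently both $d$ and $d+c$ arise as the lower-right entry of genuine matrices $P,Q\in\SL_2(\Z)$ sharing the lower-left entry $mc>0$, whose $\eta$-multipliers feed $s(d,mc)$ and $s(d+c,mc)$ into $\Phi$.

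Next I would compute $\Phi(Q)-\Phi(P)$ by writing $Q=PR$ with $R=P^{-1}Q\in\SL_2(\Z)$ and invoking the cocycle relation; this converts the difference $s(d+c,mc)-s(d,mc)$ into a single auxiliary Dedekind sum attached to $R$, together with explicit rational terms coming from the $\frac{a+d}{c}$ pieces and the $\operatorname{sign}$ correction. The auxiliary sum I would evaluate using reciprocity and the congruence $ad\equiv 1\pmod c$ (so $d\equiv a^{-1}$), which is exactly what manufactures the main term $\frac{1-a^2}{12m}$. The hypothesis $3\nmid a$ enters to guarantee $3\mid(1-a^2)$, so that $\frac{1-a^2}{12m}$ is compatible with the integrality the surrounding computation demands. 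Throughout I would carry everything modulo $2\Z$, since only the class of the error modulo the even integers is asserted.

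An equally valid, more hands-on route is to expand both sums via the sawtooth function $((x))$ from \eqref{dedsum} and subtract, using $\frac{(d+c)r}{mc}=\frac{dr}{mc}+\frac{r}{m}$ to reduce matters to the jumps of $((\cdot))$ as $r$ runs over residues mod $mc$. Either way, the genuine difficulty is not the rational main term but the parity bookkeeping: proving that the leftover integer is \emph{even} rather than merely an integer, and confirming that the signs of the lower-left entries of the intermediate matrices do not inject an odd shift. This is precisely where a sign must be tracked with care --- it is the origin of the error in Lewis's original statement that the present lemma corrects --- so I would spend most of the effort pinning down the $\operatorname{sign}(c_1c_2c_3)$ terms and the integer-valued summands modulo $2$.
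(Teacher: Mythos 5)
Your sketch cannot be matched against an in-paper argument, because the paper does not prove this lemma at all: it is imported from Lemma 2 of Lewis's paper (with a sign error there corrected). So the only question is whether your outline is itself a complete proof, and it is not. The preparatory material is fine (the $\gcd$ verifications, the Rademacher function $\Phi$ and its cocycle, the observation that $3\nmid a$ gives $3\mid 1-a^2$), but the central step fails as described. Take $P,Q\in\SL_2(\Z)$ with lower rows $(mc,d)$ and $(mc,d+c)$ and upper-left entries $p,p'$. From $pd\equiv 1\pmod{mc}$, $p'(d+c)\equiv 1\pmod{mc}$ and $ad\equiv 1\pmod c$ one gets $p\equiv p'\equiv a\pmod c$ and $p-p'\equiv a^2c\pmod{mc}$; in particular, for $m>1$ the matrix $R=P^{-1}Q$ has lower-left entry $mc(p-p')$, a nonzero multiple of $mc^2$ (the variant $R'=QP^{-1}$ has lower row exactly $(-mc^2,\,1+pc)$). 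Either way, $\Phi(R)$ contains a \emph{new} Dedekind sum whose modulus is at least $mc^2$ --- strictly larger than the moduli you started with --- and a single application of reciprocity does not evaluate it: it merely launches a Euclidean-algorithm-style iteration, and controlling the rational main term and the parity of the integer part through that iteration is precisely the content of the lemma. The congruence $p-p'\equiv a^2c$ is indeed where $a^2$ enters, but your sketch stops exactly where the real work begins.

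The second gap is decisive and structural: your plan invokes only $m\mid c$, the $\gcd$ conditions, $3\nmid a$, and sign-tracking, but the conclusion is \emph{false} under those hypotheses, so no amount of ``parity bookkeeping'' within your framework can complete the proof. Concretely, take $m=2$ and $A=\left(\begin{smallmatrix}1&0\\ 8&1\end{smallmatrix}\right)$, so $a=d=1$, $3\nmid a$, and even $4m\mid c$; then $s(d+c,mc)-s(d,mc)-\frac{1-a^2}{12m}=s(9,16)-s(1,16)=\frac{3}{32}-\frac{35}{32}=-1$, an odd integer. This does not contradict the lemma only because $N_2=16\nmid 8$: the factors $8$, $6$, $24$ built into $N_m$ in \eqref{nmdef} --- together with the evenness of $N_m$, which forces $a$ odd and hence $24\mid 1-a^2$, already indispensable in the trivial case $m=1$ where periodicity gives $s(d+c,c)=s(d,c)$ --- are exactly what make the leftover integer even. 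A correct proof must have a specific place where the full divisibility $N_m\mid c$ is used; your outline has no such place, so it is not a proof with omitted details but a plan that, as designed, proves a false statement.
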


\begin{proof}[Proof of Proposition~\ref{Mtransprop}]

Suppose that   $A=\begin{pmatrix} a & b \\ c & d \end{pmatrix}\in \Gamma_0(N_m)$ has $3\nmid a$.
We may suppose that $c>0$.
For each $\la\pmod m$, \eqref{mlmtdef} gives
\begin{equation}\label{trans1}
M_{\la, m, t}( Az) =  \zeta_{m}^{- \la (t -1/24)} M\( \begin{pmatrix} 1 & \la \\ 0 & m \end{pmatrix}  Az\)  
=  \zeta_{m}^{-\la ( t -1/24 )} M \(A_\la\begin{pmatrix} 1 & \la^{\prime} \\ 0 & m \end{pmatrix}  z  \),
\end{equation}
where
\begin{equation}\label{alamdef}
A_\la = \( \begin{matrix} a+ c \la & \frac{-\la^{\prime} c \la - \la^{\prime} a + b + d \la }{m} \\ mc & d - c \la^{\prime} \end{matrix} \),
\end{equation}
and  $\la^{\prime} \in \{ 0,1,\ldots,m-1 \}$ is chosen with 
\begin{equation}\label{lamprimedef}
a \la^{\prime} \equiv b + d \la \pmod{m}.
\end{equation}
Note that $\la'$ travels the residue classes mod $m$ with $\la$.

Using \eqref{trans1} and recalling the definitions \eqref{mlmtdef} and  \eqref{tadef}, we obtain 
\begin{equation}\label{maz}
M_{\la, m, t}( Az) =(cz+d)^\frac12 w(A_\la)
\zeta_{m}^{-\la ( t  -1/24 )}\zeta_{m}^{\la' (t_A  -1/24 )}M_{\la', m, t_A}(z).
\end{equation}
We find that
\begin{align*}
w(A_\la)
\zeta_{m}&^{-\la ( t  -1/24 )}\zeta_{m}^{\la' (t_A  -1/24 )}\\
&=
i^{-\frac12}e^{-\pi i s(-d+c\lambda', mc)} \cdot (-1)^\frac{mc+1+(a+c\la)(d-c\lap)}2  
\cdot e^{2\pi i(-\frac{a+d}{24mc}-\frac{a +c\la} 4+\frac{3mc(d-c\lap)}8)}\zeta_{m}^{-\la  t+\la't_A}\\
&=\zeta_1(A, m)e^{-\pi i s(-d+c\lambda', mc)}\cdot (-1)^\frac{-a c\lap+cd\la}2 
\cdot e^{2\pi i(-\frac{a+d}{24mc}-\frac{c\la} 4-\frac{3mc^2\lap}8)}\zeta_{m}^{-\la  t+\la't_A},
\end{align*}
where $\zeta_1(A,m)$ is a $24$th root of unity which depends only on $A$ and $m$ (and not on $\la$).
We see that     
\begin{equation}\label{minus1}
  (-1)^\frac{-a c\lap+cd\la}2 
\cdot e^{2\pi i(-\frac{ c\la} 4-\frac{3mc^2\lap}8)}=1
\end{equation}
by writing $c=2c_0$ and separating cases according to the parity of $c_0$
(note that if $c_0$ is odd then $m$ is odd).
Therefore we have
\[w(A_\la)
\zeta_{m}^{-\la ( t  -1/24 )} \zeta_{m}^{\la' (t_A  -1/24 )}
=\zeta_1(A, m)e^{-\pi i s(-d+c\lambda', mc)} 
\cdot e^{2\pi i(-\frac{a+d}{24mc})}\zeta_{m}^{-\la  t+\la't_A}.
\]
We apply Lemma \ref{LewisLem2} iteratively to the matrices 
\[\begin{pmatrix}  -a& b-ja\cr c&-d+jc\cr
\end{pmatrix}\in \Gamma_0(N_m),\ \ \ 1\leq j\leq \lap-1\]
to find that
\[
s ( -d + \la^{\prime} c , mc) = s (-d, mc ) +\la^{\prime} \frac{1-a^2}{12m} + \text{an even integer} .
\]
Also, since  $\begin{pmatrix} a(1-bc) & -b^2c/m\cr mc & d\end{pmatrix}\in \SL_2(\Z)$, 
we see from \eqref{lewis} that 
\[12s(-d, mc)+\frac{a+d}{mc}-\frac{ab}m\in \Z.\]
It follows from the last three equations that
\begin{equation}\label{simp}
w(A_\la)
\zeta_{m}^{-\la ( t  -1/24 )} \zeta_{m}^{\la' (t_A  -1/24 )}
 =\zeta_2(A, m) e^{-2\pi i\lap\frac{1-a^2}{24m}}\zeta_{m}^{-\la  t+\la't_A},
\end{equation}
where $\zeta_2(A,m)$ is a $24m^{\rm th}$ root of unity which depends only on $A$ and $m$.

Recalling \eqref{tadef} and the fact that
$\la   \equiv a^2\la'-ab  \pmod{m}$, we find that
\begin{equation}\label{simp2}
\zeta_{m}^{-\la  t+\la't_A}= \zeta_m^{abt} \zeta_m^{\lambda'\frac{1-a^2}{24}}.
\end{equation}
Combining \eqref{simp}  and \eqref{simp2}, we conclude that
\begin{equation}\label{simp3}
w(A_\la)
\zeta_{m}^{-\la ( t  -1/24 )} \zeta_{m}^{\la' (t_A  -1/24 )}
 =\zeta_3(A, m), 
\end{equation}
where $\zeta_3(A,m)$ is a $24m^{\rm th}$ root of unity which depends only on $A$ and $m$.
 Proposition~\ref{Mtransprop} follows immediately from \eqref{simp3},  \eqref{maz}, and \eqref{mmtdef}.
\end{proof}

\begin{lemma} \label{reductionlemma}
Suppose that $t \pmod {m}$ is good.    Write $m=2^s3^r Q$ with $(Q, 6)=1$.  
If \[\sum a(mn+t) q^n\equiv 0\pmod 3\] then   \[\sum a(Qn+t) q^n\equiv 0\pmod 3.\]
\end{lemma}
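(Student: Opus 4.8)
The plan is to descend the hypothesized congruence along a short chain of the transformations furnished by Proposition~\ref{Mtransprop}. First I would record the elementary splitting of the coarse progression into the fine ones. Writing $d:=m/Q=2^s3^r$ and $t_j:=t+Qj$ for $0\le j\le d-1$ and sorting $n$ by its residue modulo $d$ gives
\[
\sum_n a(Qn+t)q^n=\sum_{j=0}^{d-1}q^{\,j}\sum_{n'}a\bigl(mn'+t_j\bigr)q^{\,dn'} .
\]
Since the exponents $dn'+j$ are pairwise distinct, no cancellation occurs, so $\sum_n a(Qn+t)q^n\equiv0\pmod3$ is equivalent to $H_{m,t_j}\equiv0\pmod3$ for every $j$. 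Because $(Q,6)=1$, any witness prime $p\mid Q$ for the goodness of $t$ also witnesses the goodness of each $t_j$ (as $1-24t_j\equiv1-24t\pmod p$), so all of the $H_{m,t_j}$ are again of the modular type covered by Propositions~\ref{modular} and~\ref{Mtransprop}. The lemma is thereby reduced to showing that the single hypothesis $H_{m,t}\equiv0\pmod3$ forces $H_{m,t'}\equiv0\pmod3$ for every $t'\equiv t\pmod Q$.

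I would control which $t'$ are reachable by $t\mapsto t_A$ through a clean reformulation of \eqref{tadef}: since $A\in\Gamma_0(N_m)$ forces $a$ odd and we assume $3\nmid a$, we have $24\mid a^2-1$, and \eqref{tadef} can be rewritten as $1-24t_A\equiv a^2(1-24t)\pmod{24m}$. Thus, on the invariant $s:=1-24t\in(\Z/24m)^{\times}$ (note $s\equiv1\pmod{24}$), the operation acts by multiplication by the square $a^2$, and the targets $s_j:=1-24t_j$ satisfy $s_js^{-1}\equiv1\pmod 8$, $\ s_js^{-1}\equiv1\pmod 3$, and $s_j\equiv s\pmod Q$. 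Since modulo $2^{s+3}$ the squares are exactly the units $\equiv1\pmod8$ and modulo $3^{r+1}$ they are exactly the units $\equiv1\pmod3$, each $s_js^{-1}$ is a square modulo $24m=2^{s+3}3^{r+1}Q$. Choosing $a\in(\Z/24m)^{\times}$ with $a^2\equiv s_js^{-1}$ and completing it to a matrix $A=\left(\begin{smallmatrix}a&b\\N_m&d\end{smallmatrix}\right)\in\Gamma_0(N_m)$ (possible because $a$ is automatically coprime to $N_m$ and to $3$), Proposition~\ref{Mtransprop} yields $H_{m,t}^{24m}\big|_{12m}A=H_{m,t_j}^{24m}$. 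Hence every $t_j$ is reached.

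The main obstacle is the final step: transferring the congruence across the slash, that is, deducing $H_{m,t_j}^{24m}\equiv0\pmod3$ from $H_{m,t}^{24m}\equiv0\pmod3$. This is not formal, since slashing a weakly holomorphic form by an element of $\SL_2(\Z)$ can a priori introduce a denominator at $3$ (indeed $3\mid N_m$ whenever $3\mid m$), and merely knowing that both $H_{m,t}^{24m}$ and $H_{m,t_j}^{24m}$ have integral $q$-expansions does not by itself suffice. The resolution is the integrality analysis at the cusps flagged in the introduction: one shows that the reduction of $H_{m,t}^{24m}$ modulo $3$ defines a modular form mod $3$ on $\Gamma_1(N_m)$ and that the action of $A$, whose entry $a$ is prime to $3$, commutes with reduction modulo $3$, so that it carries the zero form to the zero form. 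Granting this compatibility, $H_{m,t_j}^{24m}\equiv0\pmod3$ for all $j$, hence $H_{m,t_j}\equiv0\pmod3$ for all $j$, and the decomposition above gives $\sum_n a(Qn+t)q^n\equiv0\pmod3$, completing the proof.
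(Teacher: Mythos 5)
Your overall skeleton matches the paper's proof: split the coarse progression $t \pmod{Q}$ into the fine progressions $t_j = t + Qj \pmod{m}$, use Proposition~\ref{Mtransprop} to carry $H_{m,t}$ to each $H_{m,t_j}$, and transfer the congruence. Two of your steps are in fact more explicit than the paper's. First, you note that goodness of $t \pmod{m}$ is necessarily witnessed by a prime $p \mid Q$ (since $1-24t \equiv 1$ both mod $3$ and mod $8$) and is therefore inherited by every $t_j$; the paper leaves this implicit. Second, your covering argument is self-contained: rewriting \eqref{tadef} as $1-24t_A \equiv a^2(1-24t) \pmod{24m}$ and using that squares in $(\Z/2^{s+3}\Z)^{\times}$ are exactly the units $\equiv 1 \pmod 8$ and squares in $(\Z/3^{r+1}\Z)^{\times}$ are exactly the units $\equiv 1 \pmod 3$ is a clean replacement for the paper's bare citation of ``the argument in [Radu, Theorem 4.2],'' and I checked it is correct.

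However, there is a genuine gap at precisely the step you flag as the main obstacle and then dispose of with ``granting this compatibility.'' That compatibility --- that slashing by $A \in \Gamma_0(N_m)$ does not introduce a denominator at $3$, equivalently that the action of $A$ commutes with reduction mod $3$ --- is the deep input of the entire lemma, and it cannot be granted; it is a theorem of Deligne and Rapoport (cited in the paper as [Deligne, VII, Cor.~3.12], or [Radu, Cor.~5.3]): if $f \in M_k(\Gamma_1(N))$ has coefficients in $\Z[\zeta_N]$, then so does $f|_k\gamma$ for every $\gamma \in \Gamma_0(N)$. The paper's mechanism is: if $H_{m,t}\equiv 0 \pmod 3$, then $3^{-24m}H_{m,t}^{24m}\Delta^{j}$ has integral coefficients and lies in $M_{12(m+j)}(\Gamma_1(N_m))$ --- multiplying by $\Delta^{j}$ is needed because the integrality theorem is stated for holomorphic forms, a step your write-up omits entirely --- so slashing by $A$ produces $3^{-24m}H_{m,t_j}^{24m}\Delta^{j}$ with coefficients in $\Z[\zeta_{N_m}]$; since those coefficients are rational and $\Z[\zeta_{N_m}]\cap\Q = \Z$, one gets $H_{m,t_j}^{24m}\Delta^{j}\equiv 0 \pmod 3$, hence $H_{m,t_j}\equiv 0\pmod 3$ because $\F_3[[q]][q^{-1}]$ is an integral domain. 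Your sketch gestures at the right kind of statement (``a modular form mod $3$ on $\Gamma_1(N_m)$,'' ``commutes with reduction''), but without invoking the $q$-expansion principle the decisive step is missing --- and it is unavoidable here, since $3 \mid N_m$ whenever $3\mid m$, so no elementary argument at level prime to $3$ can substitute for it.
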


\begin{proof}   

The arithmetic progression $t\pmod Q$ is the disjoint union of the arithmetic progressions
\begin{equation}\label{arithprog}
t+\ell Q\pmod m, \ \ \ \ 0\leq \ell< 2^s 3^r.
\end{equation}
By the argument in \cite[Theorem 4.2]{Radu:Crelle},
we see that as $a$ ranges over integers with $(a, 6m)=1$, the quantity 
\[
t_{A} \equiv ta^2 + \frac{1-a^2}{24} \pmod{m}
\]
covers each of the progressions in \eqref{arithprog}.

Let $\Delta$ be the usual normalized cusp form of level one and weight 12.
By Proposition \ref{modular} there is a positive integer $j$ such that
\[
M_{m,t}^{24m} \Delta^{j} \in M_{12(m + j)} ( \Gamma_{1} (N_m) ).
\]
If $A=\begin{pmatrix} a & b \\ c & d \end{pmatrix}\in \Gamma_0(N_m)$ has $3\nmid a$ then 
by 
Proposition \ref{Mtransprop} we have
\begin{equation}\label{orbits}
M_{m,t}^{24m} \Delta^{j} \Big|_{12(m+j)} A = M_{m,t_{A}}^{24m} \Delta^{j}.
\end{equation}

We require a   fact proved by Deligne and Rapoport (see \cite[VII, Corollary 3.12]{Deligne} or  \cite[Corollary 5.3]{Radu:Crelle}):
If $f\in M_k(\Gamma_1(N))$ has coefficients in $\Z[\zeta_N]$ then the same is true of $f|_k\gamma$ for each $\gamma\in\Gamma_0(N)$.

It follows from \eqref{orbits} that if  $M_{m, t}\equiv 0\pmod 3$, then for each $t_A$ with $(a, 6m)=1$ we have 
$M_{m, t_A}\equiv 0\pmod 3$.
By \eqref{arithprog} we conclude that $M_{Q, t}\equiv 0 \pmod 3$, as desired.
\end{proof}

After Lemma \ref{reductionlemma} we are reduced to proving that if $Q$ is a positive integer with $(Q, 6)=1$ and $t\pmod Q$ is good,
then 
\begin{equation}\label{etp}
M_{Q, t}\not\equiv 0\pmod 3.
\end{equation}
 By Proposition~\ref{modular} we see that for sufficiently large $j$,  we have 
\begin{equation}\label{fsQt}
h:= M_{Q,t}^{24Q} \Delta^{j} \in M_{12(Q+j)} (\Gamma_{1} (2Q ) ).
\end{equation}
We compute the first term in the expansion of $h$ at the cusp $1/2$.
\begin{proposition}\label{cusp12}
Let $h$ be the modular form   defined in \eqref{fsQt}. Then we have
\[
h \Big|_{12(Q+j)}  \( \begin{matrix} 1 & 0 \\ 2 & 1 \end{matrix} \) = Q^{-12Q}q^{-Q^2 + j } + \cdots.
\]
\end{proposition}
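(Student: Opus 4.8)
The plan is to reduce everything to the leading behavior of $M_{Q,t}$ slashed by $B:=\begin{pmatrix} 1 & 0 \\ 2 & 1\end{pmatrix}$, which sends $\infty$ to the cusp $1/2$. Since $\Delta$ is modular on all of $\SL_2(\Z)$ and $B\in\SL_2(\Z)$, we have $\Delta^{j}\big|_{12j}B=\Delta^{j}=q^{j}+\cdots$. Because $t\pmod Q$ is good, $M_{Q,t}=H_{Q,t}$ is holomorphic, and compatibility of the slash with products gives
\[
h\big|_{12(Q+j)}B=\big(M_{Q,t}\big|_{1/2}B\big)^{24Q}\cdot \Delta^{j}.
\]
Thus it suffices to find the leading term of $M_{Q,t}\big|_{1/2}B$, raise it to the $24Q$ power, and shift the exponent by $j$.

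To compute $M_{Q,t}(Bz)$ I would decompose each matrix in \eqref{mmtdef}--\eqref{mlmtdef}. For $0\le\lambda<Q$,
\[
\begin{pmatrix} 1 & \lambda \\ 0 & Q\end{pmatrix}B=\begin{pmatrix} 1+2\lambda & \lambda \\ 2Q & Q\end{pmatrix}=A_\lambda U_\lambda,
\]
where $A_\lambda=\begin{pmatrix} a_\lambda & b_\lambda \\ c_\lambda & d_\lambda\end{pmatrix}\in\Gamma_0(2)$ with $c_\lambda>0$ and $U_\lambda=\begin{pmatrix} u_1 & u_2 \\ 0 & u_3\end{pmatrix}$ with $u_1u_3=Q$ and $0\le u_2<u_3$. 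Writing $g=\gcd(1+2\lambda,Q)$ (and noting $1+2\lambda$ is odd, so $\gcd(1+2\lambda,2Q)=g$), the Hermite decomposition forces $a_\lambda=(1+2\lambda)/g$, $c_\lambda=2Q/g$ (even, as needed), $u_1=g$ and $u_3=Q/g$. Applying \eqref{Mtrans} to $A_\lambda$, and computing the automorphy factor directly from $c_\lambda u_1=2Q$ and $c_\lambda u_2+d_\lambda u_3=Q$, one finds $c_\lambda(U_\lambda z)+d_\lambda=Q(2z+1)/u_3$, so the external $(2z+1)^{-1/2}$ from slashing by $B$ cancels and
\[
M_{Q,t}\big|_{1/2}B=\frac1Q\sum_{\lambda=0}^{Q-1}\zeta_Q^{-\lambda(t-1/24)}\,w(A_\lambda)\Big(\tfrac{Q}{u_3}\Big)^{1/2}M(U_\lambda z),
\]
up to a fixed choice of branch.

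Next I would isolate the leading term. Since $a(0)=1$, the holomorphic part of $M(U_\lambda z)$ begins with a constant times $q^{-u_1/(24u_3)}$, while the non-holomorphic parts (which cancel in the holomorphic sum, as $t\pmod Q$ is good) decay and cannot contribute a holomorphic term of most negative order. The exponent $-u_1/(24u_3)=-g^2/(24Q)$ is most negative exactly when $g=Q$, i.e.\ for the unique $\lambda_0\in\{0,\dots,Q-1\}$ with $Q\mid 1+2\lambda_0$. For this term $u_1=Q$, $u_3=1$, $u_2=0$, so $M(U_{\lambda_0}z)=M(Qz)=q^{-Q/24}+\cdots$, and $M_{Q,t}\big|_{1/2}B$ has leading exponent $-Q/24$ with leading coefficient
\[
C=\frac1Q\,\zeta_Q^{-\lambda_0(t-1/24)}\,w(A_{\lambda_0})\,Q^{1/2}.
\]

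Finally I would assemble the answer. Raising to the $24Q$ power sends the leading exponent to $-24Q\cdot\tfrac{Q}{24}=-Q^{2}$, and multiplying by $\Delta^{j}=q^{j}+\cdots$ shifts it to $-Q^{2}+j$. For the coefficient, $C^{24Q}=Q^{-24Q}\cdot Q^{12Q}\cdot\zeta_Q^{-24Q\lambda_0(t-1/24)}\cdot w(A_{\lambda_0})^{24Q}$; the $\zeta_Q$ factor equals $e^{-2\pi i\cdot 24\lambda_0(t-1/24)}=1$ since $24\lambda_0(t-1/24)=24\lambda_0 t-\lambda_0\in\Z$, and $w(A_{\lambda_0})^{24Q}=1$ by \eqref{wa}, so $C^{24Q}=Q^{-12Q}$, giving
\[
h\big|_{12(Q+j)}B=Q^{-12Q}q^{-Q^{2}+j}+\cdots.
\]
The main obstacle is the careful bookkeeping of the automorphy factors $w(A_\lambda)$ (through the Dedekind sums) together with the square-root branches in $(Q/u_3)^{1/2}$ and the cancellation of $(2z+1)^{\pm1/2}$; the decisive simplification is that every such root-of-unity ambiguity has order dividing $24Q$ and hence disappears upon raising to the $24Q$ power, leaving only the clean powers of $Q$ from $(1/Q)^{24Q}(Q^{1/2})^{24Q}=Q^{-12Q}$.
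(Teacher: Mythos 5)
Your proposal is correct and follows essentially the same route as the paper: the same decomposition of $\begin{pmatrix} 1 & \lambda \\ 0 & Q \end{pmatrix}\begin{pmatrix} 1 & 0 \\ 2 & 1 \end{pmatrix}$ into a $\Gamma_0(2)$ matrix times an upper-triangular matrix (your $A_\lambda U_\lambda$ is the paper's $C_\lambda$ followed by its triangular factors), the same application of \eqref{Mtrans}, the same identification of the unique dominant term $\lambda_0=(Q-1)/2$ with $d_{\lambda_0}=Q$, and the same use of \eqref{wa} to eliminate all root-of-unity ambiguities after raising to the $24Q$-th power. Your explicit verification that the $\zeta_Q$-factor is exactly $1$ (via $24\lambda_0(t-1/24)\in\Z$) is in fact slightly more detailed than the paper's terse final step.
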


\begin{proof}
By \eqref{mmtdef} and \eqref{mlmtdef} we have
\begin{align}\label{mqt}
M_{Q,t} \( \( \begin{matrix} 1 & 0 \\ 2 & 1 \end{matrix} \) z \) = \frac{1}{Q} \sum_{\la=0}^{Q-1} \zeta_{Q}^{ - \la ( t-1/24)}  M\( \( \begin{matrix} 1 & \la \\ 0 & Q \end{matrix} \)  \( \begin{matrix} 1 & 0 \\ 2 & 1 \end{matrix} \) z \).
\end{align}
We find that
\[
  \begin{pmatrix} 1 & \la \\ 0 & Q \end{pmatrix} \begin{pmatrix} 1 & 0 \\ 2 & 1 \end{pmatrix}  \\
= C_\la \begin{pmatrix} 1 & \la^{\prime} \\ 0 & Q/d_{\la} \end{pmatrix} \begin{pmatrix} d_{\la} & 0 \\ 0 & 1 \end{pmatrix} ,
\]
where 
\[d_{\la}: = ( 1+2  \la, Q),\] 
$\la^{\prime} \in \{0,1,2,\ldots,\frac{Q}{d_{\la}} - 1 \}$ is uniquely defined by the congruence
 \[\frac{1+2\la}{d_{\la}} \la^{\prime} \equiv \la \pmod{  Q/d_{\la}},\] and
\[
C_\la :=\begin{pmatrix} \frac{1 + 2  \la}{d_{\la}} & \frac{ - \frac{1+2 \la}{d_{\la}} \la^{\prime} + \la}{Q/d_{\la}} \\ 2  Q/d_{\la} & -2  \la^{\prime} + d_{\la} \end{pmatrix}  \in \Gamma_{0} (2).
\]
By \eqref{Mtrans}, we obtain  
\begin{equation}\label{MC}
M \(C_\la  \frac{d_{\la} z + \la^{\prime}}{Q/d_{\la}} \) = w(C_\la)\cdot \( d_{\la} ( 2 z+1) \)^{1/2}  M  \( \frac{d_{\la} z + \la^{\prime}}{Q/d_{\la}} \).
\end{equation}
Since $M=q^{-1/24}+\dots$, we see from \eqref{mqt} and \eqref{MC} that the leading term in the expansion
of 
\begin{equation}\label{mqtexp}
(2z+1)^{-1/2}M_{Q,t}   \( \begin{pmatrix} 1 & 0 \\ 2 & 1 \end{pmatrix} z \)
\end{equation}
arises from those   $\lambda$ with $d_\lambda=Q$.
The only such $\la $ is $\la_0=(Q-1)/2$, in which case we have $\la_0'=0$ and 
$C_{\la_0} :=\begin{pmatrix}1& (Q-1)/2 \\ 2 & Q \end{pmatrix}.$
Therefore the  leading term in \eqref{mqtexp} is 
\[\frac1{\sqrt{Q}}  w(C_{\la_0}) \zeta_Q^{\left(\frac{1-Q}2\right)(t-1/24)}\cdot q^{\frac{-Q}{24}}+\dots.\]
  Proposition~\ref{cusp12} follows immediately from \eqref{fsQt} and \eqref{wa}.
  \end{proof}

Theorem~\ref{mainthm1} now follows quickly.
\begin{proof}[Proof of Theorem \ref{mainthm1}]
  Deligne and Rapoport (\cite[Corollary 3.13 and \S 4.8]{Deligne} or  \cite[\S 12.3.5]{Diamond}) proved that 
if $f\in M_k(\Gamma_1(N))$ has coefficients in $\Z[\frac1N, \zeta_N]$, then the same is true of its expansion at each cusp.

Suppose by way of contradiction that \eqref{etp} is false.
Then the modular form $3^{-24Q} h\in M_{12(Q+j)}(\Gamma_1(2 Q))$ has integral coefficients.
It follows that the coefficients of 
\[3^{-24Q} h \Big|_{12(Q+j)}  \( \begin{matrix} 1 & 0 \\ 2 & 1 \end{matrix} \)\]
lie in $\Z[ \frac{1}{2Q}, \zeta_{2Q} ]$.   This contradicts Proposition~\ref{cusp12}, and Theorem~\ref{mainthm1} is proved.
\end{proof}

\section{Proof of  Theorem \ref{mainthm2}}
The flow of the argument is similar to that of the last section. We now work with the function
\[
\Omega (z) = 2  q^{2/3} \omega (q) -2i\sqrt{3}\int_{-\overline{2z}}^{i\infty}\frac{g_0 (\tau)}{(-i(2z+\tau))^\frac12}\, d\tau.\]
A computation shows that 
\[
\Omega(z)= 2 q^{2/3} \omega( q) + q^{-1/3} \sum d(n) \Gamma \( \tfrac{1}{2} , 4\pi (n +\tfrac{1}{3} ) y \) q^{-n} ,
\]
where 
\[
d(n)=0 \qquad \text{unless}\qquad  n = 3k^2 + 2k\qquad \text {for some integer $k$}.
\]
 To isolate   arithmetic progressions, we define
\[
\Omega_{m,t} := \frac{1}{m} \sum_{\la=0}^{m -1} \zeta_{m}^{-\la( t+2/3)} \Omega \( \( \begin{matrix} 1 & \la \\ 0 & m \end{matrix} \) z \).
\]
A calculation gives 
\[
\Omega_{m,t} = q^{\frac{t+2/3}{m}} \sum c(mn+t) q^n + q^{\frac{t+2/3}{m}} \sum d(mn-t-1) \Gamma \( \frac{1}{2}, 4\pi \( n - \frac{t+2/3}{m} \) y \) q^{-n}.
\]
In this case, we call the arithmetic progression $t \pmod{m}$  {\it good} if 
\[
\( \frac{-3t-2}{p} \) = -1 \quad\quad\text{for some prime $p \mid m$.}
\]
If $t\pmod m$ is good, then $\Omega_{m,t}$ is weakly holomorphic.
As in the last section, it suffices to prove that when $t\pmod m$ is good we have
\[\Omega_{m,t}\not\equiv 0\pmod 3.\]

The transformation properties of $\Omega(z)$ are described in work of Andrews  \cite[Theorems 2.1 and 2.4]{Andrews:trans}. 
Using these results with  \eqref{htrans}, we   find that for   $A=\( \begin{matrix} a & b \\ c & d \end{matrix} \) \in \rm{SL}_2 (\mathbb{Z})$ with $c>0$, we have 
\begin{equation}\label{OmegaTrans}
\Omega (Az) = \begin{cases}
w_1 (A) (cz +d)^{1/2} \Omega(z)  &\text{ if $c$ is even,}\\
w_2 (A)\cdot  2^{-1/2} (cz +d )^{1/2} M (z/2)  &\text{ if $d$ is even,}
\end{cases}
\end{equation}
where $w_1 (A)$ and $w_2(A)$ are the roots of unity defined by
\begin{equation}\label{omegadefine}\begin{aligned}
w_1 (A) &:= (-i)^{1/2} (-1)^{(a-1)/2}e^{-\pi i s(-d, c/2)} e^{2\pi i \( \frac{3ab}{4} - \frac{a+d}{12c} \)},\\
w_2 (A) &:= i^{1/2} (-1)^{(32a-d)/24c}   e^{-\pi i s(-d/2, c)} e^{ -\frac{\pi i}{2} ( 2a+b -3 -3ab+3a/c )}.
\end{aligned} 
\end{equation}
Note that we have fixed a sign error in \cite[Theorem 2.1]{Andrews:trans}.

\begin{proposition} \label{Omegatransprop} 
Suppose that $t\pmod m$ is good.
Let $N_m$ be as defined in $\eqref{nmdef}$.
For every $A=\begin{pmatrix} a & b \\ c & d \end{pmatrix}\in \Gamma_0(2N_m)$ with $3\nmid a$ we have
\begin{equation}\label{gamma0trans1omea}
 \Omega_{m, t}^{24m}\big|_{12m}A
=  \Omega_{m, t_A}^{24m},
\end{equation}
where \[t_A: = t a^2 + \tfrac23 (a^{2}-1).\]
\end{proposition}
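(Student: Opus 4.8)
The plan is to imitate the proof of Proposition~\ref{Mtransprop} line by line, using the transformation law \eqref{OmegaTrans} for $\Omega$ in place of \eqref{Mtrans} for $M$. Fix $A=\begin{pmatrix} a & b \\ c & d \end{pmatrix}\in\Gamma_0(2N_m)$ with $3\nmid a$, and assume $c>0$. For each $\la\pmod m$ I would factor
\[
\begin{pmatrix} 1 & \la \\ 0 & m \end{pmatrix}A = A_\la\begin{pmatrix} 1 & \la' \\ 0 & m \end{pmatrix},\qquad
A_\la=\begin{pmatrix} a+c\la & b_\la \\ mc & d-c\la' \end{pmatrix}\in\SL_2(\Z),
\]
where $b_\la=\tfrac1m\big(b+d\la-(a+c\la)\la'\big)$ and $\la'\in\{0,\dots,m-1\}$ is determined by $a\la'\equiv b+d\la\pmod m$; integrality of $A_\la$ uses $m\mid c$, which holds since $2N_m\mid c$. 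The decisive structural point is that the lower-left entry $mc$ of $A_\la$ is \emph{even} (because $N_m$ is even and $2N_m\mid c$), whereupon $\det A_\la=1$ forces $a+c\la$ and $d-c\la'$ to be odd. Hence for \emph{every} $\la$ we land in the first branch of \eqref{OmegaTrans}: only $w_1$ occurs and we never pass to the companion function $M(z/2)$ — this is precisely why the level is enlarged from $N_m$ to $2N_m$. Since $mc\cdot\tfrac{z+\la'}m+(d-c\la')=cz+d$, one obtains
\[
\Omega\left(\begin{pmatrix}1&\la\\0&m\end{pmatrix}Az\right)
= w_1(A_\la)\,(cz+d)^{1/2}\,\Omega\left(\begin{pmatrix}1&\la'\\0&m\end{pmatrix}z\right),
\]
and, collecting the roots of unity in the definition of $\Omega_{m,t}$,
\[
\Omega_{m,t}(Az)=\frac{(cz+d)^{1/2}}{m}\sum_{\la=0}^{m-1} P_\la\, S_{\la'}(z),
\]
where $S_{\la'}(z):=\zeta_m^{-\la'(t_A+2/3)}\Omega\big(\begin{pmatrix}1&\la'\\0&m\end{pmatrix}z\big)$ is the $\la'$-th summand of $\Omega_{m,t_A}$ and $P_\la:=w_1(A_\la)\,\zeta_m^{-\la(t+2/3)}\zeta_m^{\la'(t_A+2/3)}$.

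The whole proposition then reduces to showing that $P_\la$ is a $24m$-th root of unity \emph{independent of $\la$}: granting this, since $\la'$ runs over all residues mod $m$ as $\la$ does, the sum collapses to $\Omega_{m,t}(Az)=(cz+d)^{1/2}P\,\Omega_{m,t_A}(z)$, and raising to the $24m$-th power with $P^{24m}=1$ yields \eqref{gamma0trans1omea}. The definition $t_A=ta^2+\tfrac23(a^2-1)$ is engineered so that
\[
t_A+\tfrac23=a^2\!\left(t+\tfrac23\right)\quad(\text{exactly, with }t_A\in\Z\text{ since }3\nmid a),
\]
which together with $\la\equiv a^2\la'-ab\pmod m$ collapses the two $\zeta_m$-factors to
\[
\zeta_m^{-\la(t+2/3)}\zeta_m^{\la'(t_A+2/3)}
= e^{2\pi i\frac{(3t+2)ab}{3m}}\,\zeta_3^{\,2k_\la},\qquad a^2\la'-\la=ab+mk_\la .
\]
The first factor is a harmless $3m$-th root of unity, and the residual cube root $\zeta_3^{\,2k_\la}$ is the only $\la$-dependence coming from this source.

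It remains to analyze the $\la$-dependence of $w_1(A_\la)$, which enters through $(-1)^{(a+c\la-1)/2}$, the elementary exponential $e^{2\pi i(3a_\la b_\la/4-(a_\la+d_\la)/(12mc))}$, and the Dedekind sum $s(-d+c\la',mc/2)$. As in \eqref{minus1} I would dispose of the elementary signs by writing $c=2c_0$ and separating cases on the parity of $c_0$. The Dedekind sum I would treat by iterating a shift identity analogous to Lemma~\ref{LewisLem2} — now with the half-denominator $mc/2$ in place of $mc$, applied to the matrices $\begin{pmatrix}-a&b-ja\\ c&-d+jc\end{pmatrix}\in\Gamma_0(2N_m)$ — so as to express $s(-d+c\la',mc/2)$ as $s(-d,mc/2)$ plus a term linear in $\la'$ plus an even integer. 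The constant extracted from this identity is arranged to produce exactly $\zeta_3^{-2k_\la}$ up to a $\la$-independent $24m$-th root of unity, cancelling the residual and leaving $P_\la=P$ constant; that $P^{24m}=1$ then follows from the half-denominator analog of the Lewis relation \eqref{lewis} (giving $w_1(A_\la)^{24}=1$, the counterpart of \eqref{wa}) together with the $3m$-th-root nature of $e^{2\pi i(3t+2)ab/(3m)}$.

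The main obstacle is this last step: establishing the correct half-denominator Dedekind-sum shift identity and verifying that its phase precisely annihilates the residual $\zeta_3^{\,2k_\la}$. This is genuinely more delicate than Section~3, because the denominator $mc/2$ (rather than $mc$) and the fractional shift $\tfrac23$ (rather than $-\tfrac1{24}$) change both the sign bookkeeping in $(-1)^{(a_\la-1)/2}$ and the exact rational constant coming out of the Dedekind sum; one must also keep careful track of the interplay between $k_\la\bmod 3$ and $\la'\bmod 3$ to confirm that the cancellation is $\la$-uniform.
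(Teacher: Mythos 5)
Your proposal reproduces the paper's own strategy step for step: the same factorization into $A_\la$ and $\la'$ from \eqref{alamdef}--\eqref{lamprimedef}, the same observation that only the $w_1$-branch of \eqref{OmegaTrans} ever occurs, and the same reduction to showing that $P_\la = w_1(A_\la)\,\zeta_m^{-\la(t+2/3)}\zeta_m^{\la'(t_A+2/3)}$ is a $24m$-th root of unity independent of $\la$. Your handling of the exponential factors is also correct: $t_A+\tfrac23 = a^2\bigl(t+\tfrac23\bigr)$ exactly, and the residual $\la$-dependence from that source is $\zeta_3^{2k_\la}$ with $a^2\la'-\la = ab+mk_\la$. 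However, the decisive step --- that the $\la$-dependence of $w_1(A_\la)$ is \emph{exactly} $\zeta_3^{-2k_\la}$ times a $\la$-independent root of unity --- is only asserted (``arranged to produce''), and you yourself flag it as the unresolved obstacle. That verification is the entire analytic content of the proposition, so as written the proposal has a genuine gap at its center.

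The gap closes exactly as you guess, and recording how also corrects one claim in your setup. The enlargement of the level to $2N_m$ is \emph{not} needed for branch selection: every $N_m$ in \eqref{nmdef} is even, so $mc$ is even already when $N_m\mid c$. What $2N_m\mid c$ actually buys is $4m\mid c$, $ad\equiv 1\pmod{4m}$, and $N_m\mid c/2$. With these in hand: the sign satisfies $(-1)^{(a+c\la-1)/2}=(-1)^{(a-1)/2}$; expanding $a_\la b_\la$ modulo the denominator gives $\tfrac{3a_\la b_\la}{4}\equiv \tfrac{3(ab+\la-a^2\la')}{4m}\pmod{1}$; and the argument of Lemma \ref{LewisLem2}, applied at denominator $mc/2=m(c/2)$ to matrices with lower-left entry $c/2$ (two applications per unit shift of $\la'$, which is where $N_m\mid c/2$ is used), yields
\[
s(-d+\la'c,\,mc/2)=s(-d,\,mc/2)+\la'\,\frac{1-a^2}{6m}+\text{even integer},
\]
while \eqref{lewis} disposes of $s(-d,mc/2)$ together with the term $\tfrac{a+d}{12mc}$. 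Assembling, modulo $24m$-th roots of unity depending only on $A$, $m$, $t$, one finds
\[
w_1(A_\la)=e^{2\pi i\left(\frac{3(\la-a^2\la')}{4m}+\frac{a^2\la'-\la}{12m}\right)}
=e^{2\pi i\,\frac{2(\la-a^2\la')}{3m}}
=e^{-2\pi i\,\frac{2ab}{3m}}\,\zeta_3^{-2k_\la},
\]
which cancels your residual $\zeta_3^{2k_\la}$ and leaves $P_\la$ constant, completing the proof upon raising to the $24m$-th power. So your route is the paper's route; what is missing is precisely this computation, not any new idea.
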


\begin{proof}[Proof of Proposition \ref{Omegatransprop}]
Write 
\[\Omega_{\la,m,t} (z ) = \zeta_{m}^{- \la (t +2/3)} \Omega\( \begin{pmatrix} 1 & \la \\ 0 & m \end{pmatrix}  z\).\]
Suppose that $A \in \Gamma_0 (2N_m)$ with $3 \nmid a$. Then
\[
\Omega_{\la,m,t} (Az )  =  \zeta_{m}^{-\la(t+2/3)} \Omega \( A_\la \( \begin{matrix} 1 & \la^{\prime} \\ 0 & m \end{matrix} \) z \),
\]
where $A_\la$ and $\la^{\prime}$ are   defined as in \eqref{alamdef} and \eqref{lamprimedef}. Using \eqref{OmegaTrans}, we find that
\begin{equation}\label{step1}
\Omega_{\la,m,t} (Az ) =   (cz+d)^{\frac{1}{2}} w_1(A_\la) \zeta_m^{-\la(t+\frac23)}\zeta_m^{\la'(t_A+\frac23)}\Omega_{\lap, m, t_A} (z)
\end{equation}
Note that  $4m \mid c$ and that $ad\equiv 1\pmod{4m}$. Moreover, from   Lemma~\ref{LewisLem2}  and \eqref{lewis} we have
\[
s(-d+\lap c , mc/2) = s( -d, mc/2) + \lap \frac{1- a^2 }{6m} + \text{an even integer},
\]
and
\[
 s(-d, mc/2)+\frac{a+ d }{6mc}-\frac{ab}{6m} \in \mathbb{Z}/12.
\]
Therefore,
\begin{align*}
w_1(A_\la)&=(-i)^{1/2} (-1)^{ (a+c\la -1)/2}   e^{-\pi i s(-d+c \lap , mc/2)} 
e^{2\pi i\left(\frac {3 (a+c\la)(-\lap c \la - \lap a + b + d \la)}{4m}  - \frac{a+d}{12mc }+\frac{\lap-\la}{12m}\right)}\\
 &=\omega_1\cdot e^{-\pi i s(-d+c \lap , mc/2)}  e^{2\pi i \left(\frac{ 3 (-\lap a^2  +\la)} {4m}  - \frac{a+d}{12mc }+\frac{\lap-\la}{12m}\right)}\\
&= \omega_2\cdot   e^{2\pi i \left(\frac{ 3 (-\lap a^2  +\la)} {4m}+\lap\frac{a^2-1}{12m}+\frac{\lap-\la}{12m}\right)}
 \end{align*}
 where $\omega_1$, etc. denote $24m^{\rm th}$ roots of unity which depend only on $A$,  $m$, and $t$.

Using this together with the fact that $\la \equiv a^2 \lap - ab \pmod{m}$, we obtain
\begin{align*}
w_1(A_\la)\zeta_m^{-\la(t+\frac23)}\zeta_m^{\la'(t_A+\frac23)}&=
\omega_2\cdot\zeta_m^{-\la t+\lap(t_A-\frac23(a^2-1))}\\
&=  \omega_3\cdot\zeta_m^{\lap(t_A-a^2t-\frac23(a^2-1))}\\
&=\omega_3.
\end{align*}
Proposition~\ref{Omegatransprop} follows immediately from this together with \eqref{step1}.
\end{proof}

In this case we cannot pull out powers of $2$ from the arithmetic progressions in question.  We have
\begin{lemma} \label{reductionlemma2}
Suppose that $t \pmod {m}$ is good.    Write $m=3^r Q$ with $(Q, 3)=1$.  
If \[\sum c(mn+t) q^n\equiv 0\pmod 3\] then   \[\sum c(Qn+t) q^n\equiv 0\pmod 3.\]
\end{lemma}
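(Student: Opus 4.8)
The plan is to mirror the proof of Lemma~\ref{reductionlemma}, replacing the group $\Gamma_0(N_m)$ with $\Gamma_0(2N_m)$ and invoking Proposition~\ref{Omegatransprop} in place of Proposition~\ref{Mtransprop}. The key structural fact is that the arithmetic progression $t\pmod Q$ decomposes as the disjoint union of the progressions $t+\ell Q\pmod m$ for $0\le \ell<3^r$, and these subprogressions are precisely covered by the values $t_A\equiv ta^2+\tfrac23(a^2-1)\pmod m$ as $a$ ranges over integers coprime to $6m$. So the first step is to verify, via the argument of \cite[Theorem 4.2]{Radu:Crelle}, that $a^2$ runs through all squares coprime to $m$ and that the resulting affine map $t\mapsto ta^2+\tfrac23(a^2-1)$ hits every class $t+\ell Q\pmod m$.

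Next I would set up the modular machinery. Multiplying $\Omega_{m,t}^{24m}$ by a suitable power $\Delta^j$ of the weight-$12$ cusp form produces a holomorphic form; since $t\pmod m$ is good, $\Omega_{m,t}$ is weakly holomorphic, and Proposition~\ref{Omegatransprop} gives that $\Omega_{m,t}^{24m}$ lies in a space of modular forms on $\Gamma_1(2N_m)$ (the matrices $A$ with $3\nmid a$ generate this group, exactly as in the passage following Proposition~\ref{Mtransprop}). The transformation law from Proposition~\ref{Omegatransprop} then yields, for $A=\begin{pmatrix} a & b \\ c & d\end{pmatrix}\in\Gamma_0(2N_m)$ with $3\nmid a$,
\[
\Omega_{m,t}^{24m}\Delta^j\big|A=\Omega_{m,t_A}^{24m}\Delta^j.
\]
Applying the Deligne--Rapoport integrality result (\cite[VII, Cor.~3.12]{Deligne}, as quoted in the proof of Lemma~\ref{reductionlemma}), the congruence $\Omega_{m,t}\equiv 0\pmod 3$ propagates to $\Omega_{m,t_A}\equiv 0\pmod 3$ for every admissible $a$.

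Finally, combining the covering statement from the first step with this propagation shows that every subprogression $t+\ell Q\pmod m$ satisfies the same congruence, and summing these recovers $\Omega_{Q,t}\equiv 0\pmod 3$, hence $\sum c(Qn+t)q^n\equiv 0\pmod 3$. The one point requiring genuine care—the reason this is stated as a separate lemma rather than a verbatim copy—is that here we can only remove the power of $3$, not the power of $2$: the map $t_A=ta^2+\tfrac23(a^2-1)$ depends on $a^2\pmod m$, and for the covering argument to reach the classes $t+\ell Q$ we need the squares coprime to $m$ to surject onto the relevant quotient, which works for the $3$-part but not for the $2$-part of $m$ (the obstruction is that $\{a^2:(a,2)=1\}$ collapses modulo powers of $2$). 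Verifying precisely which progressions are reachable is therefore the main obstacle, and it is exactly why the statement writes $m=3^rQ$ with $(Q,3)=1$ rather than pulling out both primes as in Lemma~\ref{reductionlemma}.
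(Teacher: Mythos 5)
Your proposal matches the paper's proof essentially step for step: the same decomposition of $t \pmod Q$ into the progressions $t+\ell Q \pmod m$, the same covering argument from \cite[Theorem 4.2]{Radu:Crelle} applied to $t_A \equiv ta^2+\tfrac23(a^2-1) \pmod m$, the same passage to a holomorphic form $\Omega_{m,t}^{24m}\Delta^j$ on $\Gamma_1(2N_m)$ via Proposition~\ref{Omegatransprop}, and the same Deligne--Rapoport integrality argument to propagate the congruence from $t$ to each $t_A$, so the proof is correct. Your closing diagnosis of why only the $3$-part of $m$ can be removed is also sound: the collapse of odd squares modulo powers of $2$ is compensated in Lemma~\ref{reductionlemma} by the factor $\frac{1-a^2}{24}$ (which absorbs the $8$), whereas here the map only involves $\tfrac23(a^2-1)$, leaving the $2$-adic obstruction in place.
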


\begin{proof}   
The arithmetic progression $t\pmod Q$ is the disjoint union of the arithmetic progressions
\begin{equation}\label{arithprog2}
t+\ell Q\pmod m, \ \ \ \ 0\leq \ell< 3^r.
\end{equation}
By the argument in \cite[Theorem 4.2]{Radu:Crelle},
we see that as $a$ ranges over integers with $3\nmid a$, the quantity 
\[
t_{A} \equiv ta^2 + \frac{2}{3} (1-a^2)  \pmod{m}
\]
covers each of the progressions in \eqref{arithprog2}.

By Proposition \ref{Omegatransprop} there is a positive integer $j$ such that
\[
\Omega_{m,t}^{24m} \Delta^{j} \in S_{12(m + j)} ( \Gamma_{1} (2N_m) ).
\]
If $A=\begin{pmatrix} a & b \\ c & d \end{pmatrix}\in \Gamma_0(2 N_m)$ has $3\nmid a$ then 
by 
Proposition \ref{Omegatransprop} we have
\[
\Omega_{m,t}^{24m} \Delta^{j} \Big|_{12(m+j)} A = \Omega_{m,t_{A}}^{24m} \Delta^{j}.
\]
If  $\Omega_{m, t}\equiv 0\pmod 3$, then the fact recorded after \eqref{orbits} shows that for each $t_A$ with $3\nmid a$ we have 
$\Omega_{m, t_A}\equiv 0\pmod 3$. By \eqref{arithprog2} we conclude that $\Omega_{Q, t}\equiv 0 \pmod 3$, as desired.
\end{proof}

 By Proposition~\ref{Omegatransprop} we see that for sufficiently large $j$,  we have 
\begin{equation}\label{omegasQt}
h_{\Omega}:= \Omega_{Q,t}^{24Q} \Delta^{j} \in M_{12(Q+j)} (\Gamma_{1} (4Q ) ).
\end{equation}
In this case, we compute the first term in the expansion of $h_{\Omega} \Big| \( \begin{matrix} 1 & 0 \\ 1& 1 \end{matrix}\)$.
\begin{proposition}\label{cusp13}
Let $h_{\Omega}$ be the modular form   defined in \eqref{omegasQt}. Then we have
\[
h_{\Omega} \Big|_{12(Q+j)}  \( \begin{matrix} 1 & 0 \\ 1 & 1 \end{matrix} \) = (-1)^Q\cdot(2Q)^{-12Q} q^{-\frac{Q^2} 2 + j } + \cdots.
\]
\end{proposition}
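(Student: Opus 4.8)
The plan is to follow the proof of Proposition~\ref{cusp12} almost verbatim, replacing the cusp matrix $\begin{pmatrix}1&0\\2&1\end{pmatrix}$ by $\begin{pmatrix}1&0\\1&1\end{pmatrix}$, the transformation law \eqref{Mtrans} by \eqref{OmegaTrans}, and the divisor $(1+2\lambda,Q)$ by $(1+\lambda,Q)$. First I would write
\[
\Omega_{Q,t}\left(\begin{pmatrix}1&0\\1&1\end{pmatrix}z\right)=\frac1Q\sum_{\lambda=0}^{Q-1}\zeta_Q^{-\lambda(t+2/3)}\,\Omega\left(\begin{pmatrix}1&\lambda\\0&Q\end{pmatrix}\begin{pmatrix}1&0\\1&1\end{pmatrix}z\right)
\]
and factor each matrix product as
\[
\begin{pmatrix}1&\lambda\\0&Q\end{pmatrix}\begin{pmatrix}1&0\\1&1\end{pmatrix}=C_\lambda\begin{pmatrix}1&\lambda'\\0&Q/d_\lambda\end{pmatrix}\begin{pmatrix}d_\lambda&0\\0&1\end{pmatrix},
\]
where $d_\lambda=(1+\lambda,Q)$, the integer $\lambda'\in\{0,\dots,Q/d_\lambda-1\}$ solves $\tfrac{1+\lambda}{d_\lambda}\lambda'\equiv\lambda\pmod{Q/d_\lambda}$, and $C_\lambda=\begin{pmatrix}(1+\lambda)/d_\lambda & \ast\\ Q/d_\lambda & d_\lambda-\lambda'\end{pmatrix}\in\mathrm{SL}_2(\Z)$. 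This is the exact analogue of the decomposition used for Proposition~\ref{cusp12}.

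Next I would apply \eqref{OmegaTrans} to each $C_\lambda$. The decisive point is that the holomorphic part of $\Omega$ begins with $2q^{2/3}$, so the $c$-even alternative of \eqref{OmegaTrans} contributes only positive exponents; hence every pole arises from the $d$-even alternative, through the factor $M(z/2)$ with $M=q^{-1/24}+\cdots$ (after, if necessary, absorbing a translation to make the lower-right entry even). A short computation then gives the term indexed by $\lambda$ a leading exponent $-d_\lambda^2/(48Q)$, which is most negative for the largest value $d_\lambda=Q$; since $(1+\lambda,Q)=Q$ forces $\lambda\equiv-1\pmod Q$, this value is attained only at $\lambda_0=Q-1$, where $\lambda'=0$ and $C_{\lambda_0}=\begin{pmatrix}1&Q-1\\1&Q\end{pmatrix}$. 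Incorporating the automorphy factor $(z+1)^{-1/2}$ from the slash and using $(Qz+Q)^{1/2}=Q^{1/2}(z+1)^{1/2}$, one finds that $(z+1)^{-1/2}\Omega_{Q,t}\left(\begin{pmatrix}1&0\\1&1\end{pmatrix}z\right)$ has leading term $(2Q)^{-1/2}\,\zeta_Q^{-\lambda_0(t+2/3)}\,w_2(C_{\lambda_0})\,q^{-Q/48}$. Raising to the $24Q$ power converts $2^{-1/2}$ into $2^{-12Q}$ and $Q^{1/2}/Q$ into $Q^{-12Q}$, the factor $\zeta_Q^{-24Q\lambda_0(t+2/3)}$ collapses to $1$ because $24(t+2/3)\in\Z$, and multiplying by $\Delta^j=q^j+\cdots$ produces the exponent $-24Q^2/48+j=-Q^2/2+j$, as claimed.

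It then remains to identify the constant as $(-1)^Q(2Q)^{-12Q}$. When $Q$ is even, $C_{\lambda_0}$ already has even lower-right entry, and a direct evaluation of \eqref{omegadefine} — simplified by the fact that $c=1$ kills the Dedekind-sum term, since $s(\,\cdot\,,1)=0$ — gives $w_2(C_{\lambda_0})^{24Q}=1$, so the constant is $(2Q)^{-12Q}=(-1)^Q(2Q)^{-12Q}$. When $Q$ is odd, both $c=1$ and $d=Q$ are odd, so \eqref{OmegaTrans} does not apply to $C_{\lambda_0}$ directly; here I would write $C_{\lambda_0}=\begin{pmatrix}1&Q-2\\1&Q-1\end{pmatrix}\begin{pmatrix}1&1\\0&1\end{pmatrix}$, apply the $d$-even case to the first factor, and absorb the translation. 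The translation replaces $M(Qz/2)$ by $M((Qz+1)/2)$, whose leading coefficient acquires the phase $e^{-\pi i/24}$; its $24Q$ power is precisely $e^{-\pi i Q}=(-1)^Q$, while $w_2$ of the first factor again raises to $1$. In either parity the constant is therefore $(-1)^Q(2Q)^{-12Q}$.

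I expect the main obstacle to be exactly this final bookkeeping: separating the two parity cases for $Q$ and tracking the root of unity $w_2$ through \eqref{omegadefine}, together with the translation phase in the odd case, so as to produce the sign $(-1)^Q$ and to confirm that the surviving factor is a genuine $z$-independent constant. Everything preceding it is a direct transcription of the argument for Proposition~\ref{cusp12}, so verifying the exponent $-Q^2/2+j$ should present no difficulty; the subtlety is entirely in the constant.
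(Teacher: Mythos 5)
Your proposal is correct and follows the paper's own proof essentially step for step: the same decomposition with $d_\lambda=(1+\lambda,Q)$, the same use of \eqref{OmegaTrans} to see that poles arise only through the $M$-factor with leading exponent $-d_\lambda^2/(48Q)$, and the same identification of the unique dominant term at $\lambda_0=Q-1$. The only difference is the final bookkeeping: since $\lambda^*$ is determined only modulo $Q/d_{\lambda_0}=1$, the paper simply takes $\lambda_0^*=Q$, so that $D_{\lambda_0}=\begin{pmatrix}1&-1\\1&0\end{pmatrix}$ has even lower-right entry and the $d$-even case of \eqref{OmegaTrans} applies for either parity of $Q$, producing $M\bigl(Q(z+1)/2\bigr)$ and hence the sign $(-1)^Q=\bigl(e^{-2\pi iQ/48}\bigr)^{24Q}$ uniformly --- exactly the translation-phase mechanism you invoke in your odd-$Q$ case, but without any case split.
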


\begin{proof}
Recall that 
\begin{align}\label{omegaqt}
\Omega_{Q,t} \( \( \begin{matrix} 1 & 0 \\ 1 & 1 \end{matrix} \) z \) = \frac{1}{Q} \sum_{\la=0}^{Q-1} \zeta_{Q}^{ - \la ( t+2/3)}  \Omega \( \( \begin{matrix} 1 & \la \\ 0 & Q \end{matrix} \)  \( \begin{matrix} 1 & 0 \\ 1 & 1 \end{matrix} \) z \).
\end{align}
We find that
\[
  \begin{pmatrix} 1 & \la \\ 0 & Q \end{pmatrix} \begin{pmatrix} 1 & 0 \\ 1 & 1 \end{pmatrix}  \\
= D_\la \begin{pmatrix} 1 & \la^* \\ 0 & Q/d_{\la} \end{pmatrix} \begin{pmatrix} d_{\la} & 0 \\ 0 & 1 \end{pmatrix} ,
\]
where 
\[d_{\la}: = ( 1+  \la, Q),\] 
$\la^{*}$ is chosen to satisfy the congruence
 \[\frac{1+\la}{d_{\la}} \la^{*} \equiv \la \pmod{  Q/d_{\la}},\] 
 and
\[
D_\la :=\begin{pmatrix} \frac{1 +   \la}{d_{\la}} & \frac{ - \frac{1+ \la}{d_{\la}} \la^{*} + \la}{Q/d_{\la}} \\   Q/d_{\la} & -  \la^{*} + d_{\la} \end{pmatrix}  \in \SL_2(\Z).
\]
Moreover, we may choose  the values of $\la^*$ in such a way that
\[-\la^*+d_\la\ \ \text{is even whenever $Q/d_\la$ is odd.}\]

By \eqref{OmegaTrans}, we obtain  
\begin{equation}\label{OmegaD}
\Omega \(D_\la  \frac{d_{\la} z + \la^{*}}{Q/d_{\la}} \) =
\begin{cases}
 w_1(D_\la)\cdot \( d_{\la} (  z+1) \)^{1/2}  \Omega  \( \frac{d_{\la} z + \la^{*}}{Q/d_{\la}} \) &\text{if $Q/d_{\la}$ is even,} \\
 w_2(D_\la)\cdot \( \frac{d_{\la}}{2} (  z+1) \)^{1/2}  M \( \frac{d_{\la} z + \la^{*}}{2 Q/d_{\la}} \) &\text{if $Q/d_{\la}$ is odd.}
 \end{cases} 
\end{equation}
Since $M=q^{-1/24}+\dots$ and $\Omega = q^{2/3} + \dots$, we see from \eqref{omegaqt} and \eqref{OmegaD} that the leading term in the expansion
of 
\begin{equation}\label{omegaqtexp}
(z+1)^{-1/2} \Omega_{Q,t}   \( \begin{pmatrix} 1 & 0 \\ 1 & 1 \end{pmatrix} z \)
\end{equation}
arises from those   $\lambda$ with $d_\lambda=Q$.  

The only such $\la $ is $\la_0=Q-1$. 
We may choose $\la_0^*=Q$, so that 
$D_{\la_0} :=\begin{pmatrix}  1& -1 \\ 1 & 0 \end{pmatrix}.$
Therefore the  leading term in \eqref{omegaqtexp} is 
\[\frac1{\sqrt{2Q}}  w_2 (D_{\la_0})\zeta_Q^{(1-Q)(t+2/3)} e^\frac{-2\pi iQ}{48} \cdot q^{\frac{-Q}{48}}+\dots,\]
and Proposition~\ref{cusp13} follows from \eqref{omegadefine}. 
\end{proof}

Theorem~\ref{mainthm2} follows by the argument  used to prove Theorem~\ref{mainthm1} in the last section.

\section{Proof of Theorem \ref{etathm} and Corollary \ref{etacor}}
 Recall that 
\[
\mathcal{S}(B,k,N,\chi) := \left\{ \eta^{B} (z) F(z) : \text{$F(z) \in M_{k}^{!} (\Gamma_0 (N) , \chi )$} \right\}.
\] 
Suppose that $\ell=2$ or $\ell=3$ and that 
\begin{equation}
\label{fdefinition}
f(z)= q^{B/24} \sum_{n=n_0}^\infty a_{f} (n) q^n=q^{n_0+B/24}+\dots \in \mathcal{S}(B,k,N,\chi)
\end{equation}
has rational, $\ell$-integral coefficients
and   a pole at infinity.
Given $m$ and $t$  we define
\[
f_{m,t} := \frac{1}{m} \sum_{\la=0}^{m-1} f_{\la,m,t} := \frac{1}{m} \sum_{\la=0}^{m-1} \zeta_{m}^{ - \la ( t+ B/24)}  f \( \( \begin{matrix} 1 & \la \\ 0 & m \end{matrix} \) z \). 
\] 
A  calculation as in \eqref{hmt} reveals that
\[
f_{m,t} = q^{ \frac{t+ B/24}{m}} \sum a_{f} (mn+t) q^n .
\]
We recall the transformation formula of the   eta function (see. e.g. \cite{Lewis}).
\begin{lemma} \label{etatrans}
Suppose that  $A = \(\begin{matrix} a & b \\ c & d \end{matrix} \) \in \Gamma_0 (1)$ has $c>0$. Then
\[
\eta (  A z ) = \exp \( \frac{\pi i }{12} \( \frac{ a+d}{c} - 12 s \( d, c \) \) \) \sqrt{-i (cz +d)}\cdot  \eta ( z ),
\]
where $s(d,c)$ is the Dedekind sum defined in \eqref{dedsum}.
\end{lemma}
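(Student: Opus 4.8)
The plan is to prove this classical formula in two stages: first pin down the \emph{shape} of the transformation (the automorphy factor $\sqrt{-i(cz+d)}$ and the fact that the multiplier is a constant depending only on $A$), and then identify that constant with $\exp\!\bigl(\tfrac{\pi i}{12}(\tfrac{a+d}{c}-12s(d,c))\bigr)$.

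For the shape, I would differentiate logarithmically. Using $\frac{d}{dz}\log\eta(z)=\frac{\pi i}{12}E_2(z)$, where $E_2(z)=1-24\sum_{n\ge1}\sigma_1(n)q^n$, together with the quasimodular transformation $E_2(Az)=(cz+d)^2E_2(z)-\frac{6ic(cz+d)}{\pi}$, the chain rule yields
\[
\frac{d}{dz}\Bigl[\log\eta(Az)-\log\eta(z)-\tfrac12\log\bigl(-i(cz+d)\bigr)\Bigr]=\frac{\pi i}{12}E_2(z)+\frac{c}{2(cz+d)}-\frac{\pi i}{12}E_2(z)-\frac{c}{2(cz+d)}=0.
\]
Hence the bracketed function is a constant $\frac{\pi i}{12}\Phi(A)$ depending only on $A$; since $\eta$ has real $q$-coefficients (so that $\eta(-\bar z)=\overline{\eta(z)}$) the exponentiated constant is unimodular, and by \eqref{lewis} it is in fact a $24$th root of unity. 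The branch of the square root is fixed by $c>0$ and $z\in\mathbb H$.

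To evaluate $\Phi(A)$ I would reduce to the generators of $\mathrm{SL}_2(\mathbb Z)$. The translation contributes $\eta(z+b)=e^{\pi i b/12}\eta(z)$, read off directly from the product, while the inversion contributes the fundamental relation $\eta(-1/z)=\sqrt{-iz}\,\eta(z)$, which I would establish by Poisson summation (equivalently from the Jacobi theta transformation, or by integrating the $S$-transformation of $E_2$). Running the Euclidean algorithm on $(c,d)$ expresses a general $A$ with $c>0$ as a word in $S$ and $T$, and composing the two special transformations accumulates a root of unity; the task is to show this accumulated phase equals $\exp\!\bigl(\tfrac{\pi i}{12}\Phi(A)\bigr)$. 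Alternatively, one may compute the constant in one step by Dedekind's contour method, writing $\log\eta(Az)-\log\eta(z)$ as a residue sum of products of cotangents, in which the sawtooth products defining $s(d,c)$ appear directly.

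The main obstacle is this second stage: matching the accumulated (or residue-computed) phase to the closed-form Dedekind-sum expression. The delicate points are the consistent choice of the $\sqrt{-i(cz+d)}$ branch under composition and the arithmetic identity this forces, which is precisely equivalent to the Dedekind reciprocity law $s(d,c)+s(c,d)=-\tfrac14+\tfrac1{12}\bigl(\tfrac cd+\tfrac dc+\tfrac1{cd}\bigr)$ (equivalently, Rademacher's cocycle relation for $\Phi$ under matrix multiplication). The integrality fact \eqref{lewis} recorded earlier serves as a running consistency check that $\exp\!\bigl(\tfrac{\pi i}{12}\Phi(A)\bigr)$ is well defined.
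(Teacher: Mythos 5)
The paper does not actually prove Lemma \ref{etatrans}: it is quoted as Dedekind's classical transformation law, with a pointer to the literature (Lewis), so there is no argument in the paper to compare yours against and your proposal must stand on its own. Its first stage does stand. Since $\eta'/\eta=\frac{\pi i}{12}E_2$ and $E_2(Az)=(cz+d)^2E_2(z)-\frac{6ic(cz+d)}{\pi}$, the function $\log\eta(Az)-\log\eta(z)-\frac12\log(-i(cz+d))$ has zero derivative, so indeed $\eta(Az)=\varepsilon(A)\sqrt{-i(cz+d)}\,\eta(z)$ for a constant $\varepsilon(A)$. Two caveats, though. First, the quasimodularity of $E_2$ must be established independently (e.g.\ by Hecke's trick), since many sources derive it \emph{from} the eta transformation; otherwise the argument is circular. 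Second, your justification that $\varepsilon(A)$ is a root of unity does not work as stated: the reflection $z\mapsto -\bar z$ used with $\eta(-\bar z)=\overline{\eta(z)}$ conjugates $A$ into a matrix whose lower-left entry is $-c<0$, so it compares $\varepsilon(A)$ with the multiplier of a \emph{different} matrix rather than proving unimodularity; and invoking \eqref{lewis} at that point is circular, because \eqref{lewis} concerns the claimed value $\frac{a+d}{c}-12s(d,c)$, not the as-yet-unidentified constant. The clean route is to observe that $\eta^{24}$ transforms with the genuine cocycle $(cz+d)^{12}$, so $A\mapsto\varepsilon(A)^{24}$ is a homomorphism of $\SL_2(\Z)$ which is trivial on the generators $S$ and $T$, hence trivial.

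The genuine gap is the second stage, which is where the entire content of the lemma lives: the identification $\varepsilon(A)=\exp\bigl(\frac{\pi i}{12}\bigl(\frac{a+d}{c}-12s(d,c)\bigr)\bigr)$ is never carried out, only pointed at. Expressing $A$ as a word in $S$ and $T$ and ``accumulating the phase'' requires knowing exactly how $\Phi(A)=\frac{a+d}{c}-12s(d,c)$ behaves under composition, i.e.\ Rademacher's rule $\Phi(A_1A_2)=\Phi(A_1)+\Phi(A_2)-3\,\mathrm{sign}(c_1c_2c_3)$, together with consistent bookkeeping of the branch of $\sqrt{-i(cz+d)}$ under composition, which is precisely what produces the $\mathrm{sign}$ correction. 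Proving that composition rule is equivalent in substance to proving Dedekind reciprocity; so observing that your identity ``is equivalent to reciprocity'' does not discharge the induction, it restates the problem. The same goes for the alternative you name: Dedekind's contour/residue computation does produce the sawtooth sum \eqref{dedsum} directly, but it must be executed, not cited. As written, your proposal is a correct roadmap of the standard Rademacher--Apostol proof, with the easy analytic half completed and the arithmetic half---the part the lemma is actually about---outstanding.
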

Define
\begin{equation}\label{nmdef1}
N_m:=\begin{cases} 
m\ \ \ &\text{if}\ \ \ (m, 6)=1,\\
8m\ \ \ &\text{if}\ \ \ (m, 6)=2,\\
3m\ \ \ &\text{if}\ \ \ (m, 6)=3,\\
24m\ \ \ &\text{if}\ \ \ (m, 6)=6\\
\end{cases}
\end{equation}
(this differs slightly from the definition of $N_m$ in Section~3).
Suppose that $A=\(\begin{matrix} a & b \\ c & d \end{matrix} \) \in \Gamma_0 (N N_m  )$ has $(a, 6)=1$, and 
 let $A_\la$  and $\la^{\prime}$ be as defined in \eqref{alamdef} and \eqref{lamprimedef}.

Suppose that $k$ is an integer.
Then we have
\begin{equation} \begin{aligned}\label{halfcase}
f_{\la, m, t}  ( Az ) &= \zeta_{m}^{- \la (t + B/24)} f \( \( \begin{matrix} 1 & \la \\ 0 & m \end{matrix} \) A z \)  \\
&= \zeta_{m}^{-\la ( t + B/24 )} f \( A_{\la} \( \begin{matrix} 1 & \la^{\prime} \\ 0 & m \end{matrix} \) z\)   \\
&=\sqrt{-i (cz+d)}^{B} (cz+d)^{k} \chi (d - c \lap ) e^{\frac{B \pi i}{12} \( \frac{a+d}{mc} + \frac{\la - \lap}{m} -12s(d-c\lap,mc) \)} \\
 &\qquad\qquad\qquad\qquad\qquad\qquad\qquad\qquad\qquad\cdot \zeta_{m}^{- \la (t + B/24)} f \( \( \begin{matrix} 1 & \lap \\ 0 & m \end{matrix} \)  z \)   \\
&= \sqrt{-i (cz+d)}^{B} (cz+d)^{k} \chi (d) e^{\frac{B \pi i}{12} \( \frac{a+d}{mc} -12s(d-c\lap,mc) \)}   \zeta_{m}^{- \la t + \lap t_A } f_{\lap, m, t_A }  ,
\end{aligned}\end{equation}
where 
\begin{equation}\label{tdefnew}
t_A \equiv ta^{2} -\frac{B(1-a^2)}{24} \pmod{m}.
\end{equation}

From Lemma 2 of \cite{Lewis} (correcting the sign error), we find that
\[
s ( d - c\lap , mc) = s(d,mc) - \lap \frac{1-a^{2}}{12m} +\text{an even integer},
\]
and   that
\[
12 s (d,mc)  - \frac{a+d}{mc} + \frac{ab}{m} \in \mathbb{Z}.
\]
Since $\la \equiv a^2 \lap - ab \pmod{m}$, \eqref{halfcase} reduces to 
\[
f_{\la, m, t}  ( Az ) = \zeta_{24m}^{\Phi(A,B,m,t)} \chi (d) \sqrt{-i (cz+d)}^{B} (cz+d)^{k} f_{\lap, m, t_A},
\]
where $\Phi(A,B,m, t)$ is an integer depending only on $A$, $B$, $m$, and $t$.
We conclude that for    $A \in \Gamma_0(N N_m)$  with $(a, 6)=1$ we have 
\begin{equation} \label{etatranscon}
( f_{m,t} (A z) )^{24mN} =  (c z + d)^{24mN\(k+ B/2\)} ( f_{m,t_A} (z) )^{24mN}.
\end{equation}

If $k$ is not an integer, then the factor $\chi (d) (cz+d)^{k}$ in \eqref{halfcase}  is replaced by 
\[
\( \frac{ mc}{d-c \lap} \)\epsilon_{d-c \lap}^{-2k}  \chi (d) (cz + d)^{k}.
\]
 We have  $\epsilon_{d-c \lap } = \epsilon_{d}$ since $c$ is a multiple of $4$. 
To show that $\leg {mc}{d-c \lap}$ is independent of $\lap$,
write $mc=2^e p_1\dots p_t$  with odd primes $p_i$.  
For each $i$ we have $\leg{p_i}{d-c\lap}=(-1)^{\frac{p_i^2-1}{2}\frac{d^2-1}2}\leg{d}{p_i}$.
If $8\mid c$ then $\leg{2^e}{d-c\lap}=\leg{2^e}{d}$, while if $8\nmid c$ then $m$ is odd, so that $e=2$.
 Thus, we can  conclude \eqref{etatranscon} in this case as well.

By \eqref{etatranscon}, there is a positive integer $j$ such that
\[
f_{m,t}^{24mN} \Delta^{j} \in S_{24mN(k + B/2)+12j} ( \Gamma_{1} (N N_m) ),
\]
and if  $A=\begin{pmatrix} a & b \\ c & d \end{pmatrix}\in \Gamma_0(N N_m)$ has $(a,6)=1$ then 
\[
f_{m,t}^{24mN} \Delta^{j} \Big|  A = f_{m,t_{A}}^{24mN} \Delta^{j}.
\]
Now  recall the definition \eqref{qmb} of $Q_{m, b}$, and write $Q=Q_{m, B}$ for simplicity.
The argument in \cite[Theorem 4.2]{Radu:Crelle}
shows that as $A$ ranges over elements of $\Gamma_0(N N_m)$ with $(a,6)=1$, the quantity 
$t_{A}$ in \eqref{tdefnew}
covers each of the progressions
\[t+j Q\pmod m, \ \ \ \ 0\leq j< m/Q.\]
 So if $f_{m, t} \equiv 0 \pmod{\ell}$ we may conclude as before that $f_{Q, t}\equiv 0\pmod\ell$,
 where $(Q, N)=1$.

To obtain a contradiction we  calculate the expansion of $f_{Q,t}$ at the cusp $1/N$. 
We have
\begin{align*}
f_{Q,t} \( \( \begin{matrix} 1 & 0 \\ N & 1 \end{matrix} \) z \) &= \frac{1}{Q} \sum_{\la = 0 }^{Q-1} \zeta_{Q}^{-\la (t+B/24)} f \( \( \begin{matrix}  1 & \la \\ 0 & Q \end{matrix} \)  \( \begin{matrix} 1 & 0 \\ N & 1 \end{matrix} \) z \)  \\
&=\frac{1}{Q} \sum_{\la = 0 }^{Q-1} \zeta_{Q}^{-\la (t+B/24)} f  \( C^{\prime} \( \begin{matrix}  1 & \la^{\prime} \\ 0 & Q/d_{\la} \end{matrix} \)  \( \begin{matrix} d_{\la} & 0 \\ 0 & 1 \end{matrix} \) z \), 
\end{align*}
where $d_{\la} = \text{gcd} ( 1+ \la N , Q)$, $\lap$ is a solution of $\frac{1+\la N}{d_{\la}} \lap \equiv \la \pmod{\frac{Q}{d_{\la}}}$, and
\[
C^{\prime} = \( \begin{matrix} \frac{1+ \la N}{d_{\la}} & \frac{ -\( \frac{1+\la N}{d_{\la}} \) \lap + \la}{Q/d_{\la}} \\ NQ / d_{\la} & -N\lap + d_{\la} \end{matrix} \)\in \Gamma_0(N).
\]
 Since $f$ has a pole at   infinity, we see that the leading term of 
\[(Nz+1)^{-k - B/2} f_{Q,t}\] arises from the  unique $\la_0$ with $d_{\la_0} =Q$. Since $\lap=0$, the  coefficient of this  term is 
\[
\xi Q^{B/2 + k - 1} ,
\]
where $\xi$ is a $24QN^{\rm th}$ root of unity.   We have
 $f_{Q, t}^{24QN}\Delta^j\in S_j(\Gamma_1(NN_Q))$.
 Note that if $\ell\nmid B$ then $\ell\nmid N_Q$ by \eqref{qmb} and \eqref{nmdef1}.
Therefore  $\ell\nmid NN_Q$, so we  obtain a contradiction as before,
and 
 Theorem \ref{etathm} is proved.\qed

We finish by treating the case of eta-quotients.
 \begin{proof}[Proof of Corollary \ref{etacor}]
Suppose that $f$ is the $\eta$-quotient
 \begin{equation*}
 f(z) = \prod_{\delta \mid N } \eta ( \delta z )^{r_{\delta}}
 \end{equation*}
 and recall that 
 \begin{equation}\label{almostdone2}
 B = \sum_{\delta \mid N} \delta r_{\delta}.\end{equation}
 If $\ell\mid \delta$ for some $\delta$,
 then, writing $\delta=\ell^s\delta'$, 
 we replace the factor $\eta(\delta z)^{r_\delta}$ by the factor
 $\eta(\delta' z)^{\ell^s r_\delta}$.  These factors are congruent modulo $\ell$, 
 and the value of $B$ is not affected by this replacement.  After this discussion, 
 we may assume that $\ell\nmid N$.

 Write
  \[
 f (z) = \eta^{B} (z) \frac{ f (z)}{\eta^{B} (z)}.
 \]
 and set 
\begin{equation}\label{almostdone1}
k=\frac12\(\sum_{\delta \mid N}  r_{\delta} - B\).
\end{equation}

 Suppose first that $\ell=2$.  In this case $N$ is odd, so   $B$ and $\sum_{\delta\mid N} r_\delta$ have the same parity,
 and $k$ is an integer. We have 
 \begin{equation}\label{almostdone3}
N^2 \left( \sum_{\delta \mid N} \frac{r_{\delta}}{\delta} - B\right) \equiv 0\pmod {24}
\end{equation}
(to see this, consider the cases $3\nmid N$ and $3\mid N$ separately).
In view of  \eqref{almostdone2} and \eqref{almostdone3},  a standard criterion \cite{Newman} applies to show that $f(z)/\eta^B(z)\in M_k^!(\Gamma_0(N^2), \chi)$ for  some character $\chi$. 
If $\ell=3$ and   $k$ is an integer, then a similar argument shows that 
 $f(z)/\eta^B(z)\in M_k^!(\Gamma_0(N^4), \chi)$ for some  $\chi$.

Finally, suppose that $k$ is not an integer.  Then \eqref{almostdone1} and \eqref{almostdone2} show that 
$N$ must be even.  So we again have $f(z)/\eta^B(z)\in M_k^!(\Gamma_0(N^4), \chi)$ for  some  $\chi$.
In each of the cases,
 Corollary~\ref{etacor}  follows from 
Theorem~\ref{etathm}.
 \end{proof}


\bibliographystyle{plain}
\bibliography{fqmod3}
\end{document}